\newtheorem{theorem}{Theorem}
\newtheorem{lemma}[theorem]{Lemma}
\newtheorem{proposition}[theorem]{Proposition}
\newtheorem{definition}[theorem]{Definition}
\newtheorem{question}[theorem]{Question}
\numberwithin{equation}{section}
\numberwithin{theorem}{section}
\newcommand{\Z}{\mathbb{Z}}
\renewcommand{\r}{\mathrm}
\newcommand{\langl}{\begin{picture}(5.1,10)
\put(1.1,3.3){\rotatebox{60}{\line(1,0){5.5}}}
\put(1.1,3.3){\rotatebox{300}{\line(1,0){5.5}}}
\end{picture}}
\newcommand{\rangl}{\begin{picture}(5,10)
\put(.9,3.3){\rotatebox{120}{\line(1,0){5.5}}}
\put(.9,3.3){\rotatebox{240}{\line(1,0){5.5}}}
\end{picture}}
\begin{document}

\title%
{On core quandles of groups}
\thanks{
Archived at \url{http://arxiv.org/abs/2006.00641}\,.
After publication, any updates, errata, related references,
etc., found will be recorded at
\url{http://math.berkeley.edu/~gbergman/papers/}.
}

\subjclass[2020]{Primary: 08B99, 20F99, 20N02, 57K12.
%                   vars:other gps:othr sets_w_2ary ...qndls
Secondary: 20F18.}
%         np.gps
\keywords{%
Involutory quandle; core quandle of a group;
identities in groups and quandles;
generating numbers of groups and quandles.
}

\author{George M. Bergman}
\address{University of California\\
Berkeley, CA 94720-3840, USA}
\email{gbergman@math.berkeley.edu}

\begin{abstract}
We review the definition of a {\em quandle,} and in particular
of the {\em core quandle} $\r{Core}(G)$ of a group $G,$ which
consists of the underlying set of $G,$ with
the binary operation $x\lhd y = x y^{-1} x.$
This is an {\em involutory} quandle, i.e., satisfies the identity
$x\lhd (x\lhd y) = y$ in addition to the other identities
defining a quandle.

{\em Trajectories} $(x_i)_{i\in\Z}$ in groups and in involutory
quandles (in the former context, sequences of the form
$x_i = x z^i$ $(x,z\in G)$ among other characterizations;
in the latter, sequences satisfying $x_{i+1}= x_i\lhd\,x_{i-1})$
are examined.
A family of necessary conditions for an involutory quandle $Q$ to be
embeddable in the core quandle of a group is noted.
Some implications are established
between identities holding in groups and in their core quandles.
Upper and lower bounds are obtained on the number of elements needed
to generate the quandle $\r{Core}(G)$ for $G$ a finitely
generated group.
Several questions are posed.
\end{abstract}
\maketitle
% - - - - - - - - - - - - - - - - - - - - - - - - - - - - - -

\section{Background}\label{S.bkgd}

The concept of quandle arose in knot theory, as a way of studying
knot groups in terms of their conjugation operation.
If for $G$ a group one defines
\begin{equation}\begin{minipage}[c]{35pc}\label{d.conj}
$x\lhd y\ =\ x\,y\,x^{-1}$ $(x,y\in G),$
\end{minipage}\end{equation}
and denotes by $Q$ the underlying set of $G$ with this operation,
one finds that
\begin{equation}\begin{minipage}[c]{35pc}\label{d.idpt}
For all $x\in Q,$ $x\lhd x\ =\ x.$
\end{minipage}\end{equation}
\begin{equation}\begin{minipage}[c]{35pc}\label{d.bij}
For all $x\in Q,$ the map $y\mapsto x\lhd y$ is a
bijection $Q\to Q.$
\end{minipage}\end{equation}
\begin{equation}\begin{minipage}[c]{35pc}\label{d.end}
For all $x,y,z\in Q,$ $x\lhd(y\lhd z)\ =\ (x\lhd y)\lhd(x\lhd z).$
In other words, for all $x\in Q,$ the map $y\mapsto x\lhd y$ is an
endomorphism of $( Q,\lhd).$
\end{minipage}\end{equation}

More generally, for any integer $d,$~\eqref{d.idpt}-\eqref{d.end} hold
for the operation on a group $G$ given by
\begin{equation}\begin{minipage}[c]{35pc}\label{d.conj_d}
$x\lhd y\ =\ x^d\,y\ x^{-d}$ $(x,y\in G).$
\end{minipage}\end{equation}

There is one more derived operation on groups $G$
for which~\eqref{d.idpt}-\eqref{d.end} hold,
which has been studied less
(though it too has been used in knot theory~\cite{top_core}),
but is the main subject of this note; namely
\begin{equation}\begin{minipage}[c]{35pc}\label{d.core}
$x\lhd y\ =\ x\ y^{-1}\,x$ $(x,y\in G).$
\end{minipage}\end{equation}

This last operation also satisfies the identity
\begin{equation}\begin{minipage}[c]{35pc}\label{d.inv}
For all $x,y\in Q,$ $x\lhd(x\lhd y)\ =\ y.$
\end{minipage}\end{equation}

Here is the terminology used for the above sorts of structures
(though formalizations and notations vary).

\begin{definition}[cf.\ {\cite{qndl}, \cite{JSC}, \cite{Wiki_quandle}}]\label{D.qndl}
A {\em quandle} is a set $Q$ given with a binary operation
$\lhd\!:\ Q^2\to Q$ satisfying~\eqref{d.idpt},~\eqref{d.bij}
and~\eqref{d.end}.
A quandle is said to be {\em involutory} if it also
satisfies~\eqref{d.inv}.

If $G$ is a group, then the quandle given by the
underlying set of $G$ with
the operation~\eqref{d.conj} is denoted $\r{Conj}(G),$ while the
\textup{(}involutory\textup{)} quandle given by the same set with the
operation~\eqref{d.core} is denoted $\r{Core}(G).$
\end{definition}

(Instead of ``involutory'', the form ``involutive'' is sometimes used.)

The above terminology was introduced, and many results on
quandles developed, in~\cite{qndl}.
(See the Introduction to that paper for notes on earlier
literature that considered cases of the concept, and
cf.\ also~\cite{Wiki_quandle}.)
It is shown in \cite[Proposition 3.1]{bi-q}
that the operations~\eqref{d.conj_d} for all $d\in\Z,$ and the lone
additional operation~\eqref{d.core},
are in fact the only derived group operations
that give quandle structures on the underlying
sets of all groups~$G.$

My own path to the construction that I subsequently
learned is called $\r{Core}(G)$ involved
conditions on a group $G$ related to one-sided orderability.
Here, sequences of group elements of the form
\begin{equation}\begin{minipage}[c]{35pc}\label{d.traj}
$(x\,z^i)_{i\in\Z}$ $(x,z\in G)$
\end{minipage}\end{equation}
seemed important.
(I will not mention orderability after this paragraph,
but for those conversant with the subject, a condition on $G$
weaker than one-sided orderability, called
``locally invariant orderability'' \cite{LIO},
is equivalent to the existence of a total ordering on the
underlying set of $G$ under which each
sequence~\eqref{d.traj} is either monotone increasing, monotone
decreasing, or decreasing up to a certain point and
increasing thereafter.
An intermediate condition is, of course, the
existence of an ordering under which every sequence~\eqref{d.traj} is
monotone increasing or decreasing.
Whether one or the other of the implications from
one-sided orderability to the latter property to
the former is reversible, is not known.)
Calling a sequence of the
form~\eqref{d.traj} in a group a ``trajectory'', one sees
that trajectories can also be characterized as the
sequences $(x_i)_{i\in\Z}$ such that for all $i,$
$x_{i+1} = x_i\,x_{i-1}^{-1}\,x_i;$ equivalently,
$x_{i-1} = x_i\,x_{i+1}^{-1}\,x_i.$
Though I never got anywhere with using them to study orderability,
I found the concept of trajectory
and the properties of the operation~\eqref{d.core} that underlies
it intriguing.
I eventually learned that the kind of
structure I was looking at had already been named, as described above.

A disadvantage of condition~\eqref{d.bij} of Definition~\ref{D.qndl}
is that it is not expressed by identities.
To do that, one can introduce a second binary operation
(written $y\rhd x,$ or $x\lhd^{-1} y)$
which inverts the effect of $x\lhd\,.$
This gives another commonly used formulation of the concept of quandle.
However, I have used Definition~\ref{D.qndl} here
because in the case of involutory quandles,~\eqref{d.bij} is
implied by~\eqref{d.inv}, and hence can be dropped,
with no additional operation needed.

Let us also note that in the presence of~\eqref{d.inv},
the identity of~\eqref{d.end} is equivalent to the identity
gotten by replacing $z$ everywhere in it by $x\lhd z,$
applying~\eqref{d.inv} to the resulting occurrence of
$x\lhd(x\lhd z),$ and interchanging the two sides:
\begin{equation}\begin{minipage}[c]{35pc}\label{d.end2}
For all $x,y,z\in Q,$
$(x\lhd y)\lhd z\ =\ x\lhd(y\lhd(x\lhd z)).$
\end{minipage}\end{equation}
This formula will prove useful in that it allows us to
reduce any $\!\lhd\!$-expression to one in which parentheses
are clustered to the right.

Summarizing, we have

\begin{lemma}\label{L.redef}
An involutory quandle can be characterized as a set
$Q$ given with a binary operation $\lhd$
satisfying~\eqref{d.idpt},~\eqref{d.inv} and~\eqref{d.end2}.\qed
\end{lemma}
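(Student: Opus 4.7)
The plan is to verify the two directions of the equivalence between the proposed axiomatization and that of Definition~\ref{D.qndl} supplemented by~\eqref{d.inv}. One direction is essentially already present in the text: assuming the standard involutory-quandle axioms, the derivation of~\eqref{d.end2} from~\eqref{d.end} was carried out in the paragraph preceding the lemma, by substituting $x\lhd z$ for $z$ in~\eqref{d.end}, collapsing the resulting $x\lhd(x\lhd z)$ to $z$ via~\eqref{d.inv}, and interchanging the two sides. Combined with the fact that~\eqref{d.idpt} and~\eqref{d.inv} are themselves among the axioms of an involutory quandle, this handles the forward implication.

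For the converse, I would assume $(Q,\lhd)$ satisfies~\eqref{d.idpt},~\eqref{d.inv}, and~\eqref{d.end2}, and deduce~\eqref{d.bij} and~\eqref{d.end}. Property~\eqref{d.bij} is immediate: \eqref{d.inv} says that the map $y\mapsto x\lhd y$ is its own two-sided inverse, hence a bijection. To recover~\eqref{d.end}, the key step is to reverse the substitution used above: in~\eqref{d.end2}, replace $z$ by $x\lhd z$. The left-hand side then becomes $(x\lhd y)\lhd(x\lhd z)$, while the right-hand side becomes $x\lhd(y\lhd(x\lhd(x\lhd z)))$, which by~\eqref{d.inv} collapses to $x\lhd(y\lhd z)$. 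Interchanging sides yields~\eqref{d.end}.

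I expect no serious obstacle here; the argument is entirely a matter of symbolic manipulation, and the main point --- that the substitution $z\mapsto x\lhd z$ is its own inverse once~\eqref{d.inv} is in hand, so the derivation of~\eqref{d.end2} from~\eqref{d.end} runs equally well in reverse --- is already foreshadowed in the author's remark that the two identities are equivalent in the presence of~\eqref{d.inv}. The only thing worth keeping an eye on is that neither~\eqref{d.bij} nor~\eqref{d.end} is used in the derivation, so the implication really does proceed from the three listed identities alone.
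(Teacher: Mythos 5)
Your argument is correct and is essentially the paper's own: the lemma is stated with a \qed precisely because the two preceding paragraphs already observe that~\eqref{d.bij} follows from~\eqref{d.inv} and that~\eqref{d.end} and~\eqref{d.end2} are interchangeable via the self-inverse substitution $z\mapsto x\lhd z.$ Your explicit check of the reverse substitution, and your remark that only the three listed identities are used, fill in exactly what the paper leaves implicit.
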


We note for later reference the easily checked result:

\begin{lemma}\label{L.auts}
If $G$ is a group, then the following sorts of permutations of the
underlying set of $G,$ defined in terms of the group
structure of $G,$ give automorphisms of the
involutory quandle $\r{Core}(G).$\\[.2em]
\textup{(i)} \ For every $g\in G,$ the map $x\mapsto x\,g.$\\[.2em]
\textup{(ii)}\,\ For every $h\in G,$ the map $x\mapsto h\,x.$\\[.2em]
\textup{(iii)} The map $x\mapsto x^{-1}.$ \qed
\end{lemma}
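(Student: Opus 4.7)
The plan is to verify each of (i), (ii), (iii) directly from the definition $x\lhd y = x\,y^{-1}\,x$ of the core-quandle operation. Each of the three maps is manifestly a bijection of the underlying set of $G$: right multiplication by $g$ has inverse right multiplication by $g^{-1}$, left multiplication by $h$ has inverse left multiplication by $h^{-1}$, and inversion is its own inverse. So the only real content is that each of the three permutations commutes with $\lhd$.

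For (i), I would compute
\[
(xg)\lhd(yg)\ =\ (xg)(yg)^{-1}(xg)\ =\ x\,g\,g^{-1}\,y^{-1}\,x\,g\ =\ (x\,y^{-1}\,x)\,g\ =\ (x\lhd y)\,g,
\]
where the $g$'s in the middle cancel.
For (ii), similarly,
\[
(hx)\lhd(hy)\ =\ (hx)(hy)^{-1}(hx)\ =\ h\,x\,y^{-1}\,h^{-1}\,h\,x\ =\ h\,(x\,y^{-1}\,x)\ =\ h\,(x\lhd y),
\]
the $h^{-1},h$ in the middle cancelling.
For (iii), I would observe
\[
x^{-1}\lhd y^{-1}\ =\ x^{-1}(y^{-1})^{-1}x^{-1}\ =\ x^{-1}\,y\,x^{-1}\ =\ (x\,y^{-1}\,x)^{-1}\ =\ (x\lhd y)^{-1}.
\]

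There is no real obstacle here; the lemma is what the author calls ``easily checked'', and the three verifications above are the whole content. The only thing worth noting conceptually is the slight asymmetry: (i) and (ii) work because $\lhd$, written out, is a word of total exponent-sum $+1$ in which the $y$ appears with exponent $-1$, so that factors introduced on the right or left of each variable cancel across the $y^{-1}$ in the middle; while (iii) works because the word $x\,y^{-1}\,x$ is a palindrome up to the sign of $y$'s exponent, so inversion of all letters returns the same word evaluated on the inverses.
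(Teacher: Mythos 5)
Your verification is correct and is exactly the direct computation the paper has in mind: the lemma is stated with no written proof (it is flagged as ``easily checked''), and the three one-line calculations you give, together with the observation that each map is a bijection, are the whole intended content.
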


\section{A normal form for free involutory quandles}\label{S.nml_form}

Let us now prove

\begin{theorem}\label{T.ids}
The identities satisfied by the derived operation~\eqref{d.core} on
all groups are precisely the consequences
of~\eqref{d.idpt},~\eqref{d.inv} and~\eqref{d.end2}.

Any word in a set of symbols $X$ and
the operation-symbol $\lhd$ can be reduced, using these identities,
to a unique expression
\begin{equation}\begin{minipage}[c]{35pc}\label{d.x_0dots}
$x_0\lhd(x_1\lhd( \dots \lhd(x_{n-1}\lhd x_n)\dots))$\quad
\textup{(}with parentheses clustered on the right\textup{)},
where all $x_i\in X,$ and no two successive arguments
$x_i,\ x_{i+1}$ are the same.
\end{minipage}\end{equation}

Thus, the expressions~\eqref{d.x_0dots} give a normal form
for elements of the free involutory quandle on $X.$
\end{theorem}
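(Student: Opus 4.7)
The plan is to establish the existence of the normal form first, then its uniqueness, and then to read off parts~(1) and~(3) from these. For \emph{existence}, starting with an arbitrary $\lhd$-term on $X$, I would first rewrite it using~\eqref{d.end2} into ``right-clustered'' form $y_0\lhd(y_1\lhd(\cdots\lhd(y_{m-1}\lhd y_m)\cdots))$ with all $y_i\in X$. The argument is by induction on the number of $\lhd$-operations, exploiting that if $w=u\lhd v$ and $u$ has already been right-clustered as $u_0\lhd U'$ with $U'$ right-clustered of length $\ge 1$, then~\eqref{d.end2} rewrites $(u_0\lhd U')\lhd v$ as $u_0\lhd(U'\lhd(u_0\lhd v))$, and one recurses (with a secondary induction on the length of $U'$, which strictly decreases). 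Once the expression is right-clustered, one iteratively uses~\eqref{d.inv} to collapse any internal adjacent repetition $y_i\lhd(y_i\lhd R)=R$, and~\eqref{d.idpt} to collapse a trailing repetition $y_{m-1}\lhd y_m = y_{m-1}$ when $y_{m-1}=y_m$. Each collapse strictly shortens the expression, so the process terminates at an expression of the form~\eqref{d.x_0dots}.

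For \emph{uniqueness}, I would evaluate~\eqref{d.x_0dots} in $\r{Core}(F)$, where $F$ is the free group on $X$. A short induction on $n$ using the definition~\eqref{d.core} yields
\[
x_0\lhd(x_1\lhd(\cdots\lhd(x_{n-1}\lhd x_n)\cdots))\;=\;x_0\,x_1^{-1}\,x_2\,x_3^{-1}\cdots x_n^{(-1)^n}\cdots x_3^{-1}\,x_2\,x_1^{-1}\,x_0
\]
in $F$: a palindromic, sign-alternating product of length $2n+1$. Since exponents alternate, two adjacent letters cancel in $F$ precisely when the corresponding consecutive variables $x_i, x_{i+1}$ coincide; hence the condition in~\eqref{d.x_0dots} that no two successive arguments are equal is exactly what forces the right-hand side to be freely reduced in $F$. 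Distinct normal-form expressions therefore yield distinct freely reduced words in $F$, hence distinct elements of $\r{Core}(F)$.

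The remaining assertions follow quickly. For~(1): if an identity $w_1=w_2$ holds for~\eqref{d.core} in all groups, it holds in particular in $\r{Core}(F)$; reducing each side to a normal form using only~\eqref{d.idpt},~\eqref{d.inv},~\eqref{d.end2} and invoking uniqueness forces the two normal forms to coincide, so $w_1=w_2$ is already a consequence of those three identities. For~(3): the free involutory quandle on $X$ is the quotient of the set of all $\lhd$-terms by the congruence generated by~\eqref{d.idpt},~\eqref{d.inv},~\eqref{d.end2} (equivalently, by the identities of Lemma~\ref{L.redef}); existence supplies a representative of the form~\eqref{d.x_0dots} in each congruence class, and uniqueness shows distinct such representatives lie in distinct classes. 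The main subtlety is in the existence step, since~\eqref{d.end2} duplicates subterms and so does not decrease the overall term size, requiring the nested induction above; the heart of the theorem, however, lies in the palindromic-product evaluation in $\r{Core}(F)$ and the free-reducedness observation that drives uniqueness.
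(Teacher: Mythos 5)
Your proposal is correct and follows essentially the same route as the paper: right-cluster via~\eqref{d.end2}, collapse repetitions via~\eqref{d.inv} and~\eqref{d.idpt}, and prove uniqueness by evaluating the normal form as a palindromic alternating-sign word in the free group on $X,$ which is freely reduced exactly when no two successive arguments coincide. The only difference is the termination device for the right-clustering step — you use a nested structural induction where the paper introduces an ``implicit length'' invariant — and both work.
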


\begin{proof}
The verification of~\eqref{d.idpt},~\eqref{d.inv} and~\eqref{d.end2}
for the operation~\eqref{d.core} is immediate.
Postponing the claim that those three identities
imply all identities satisfied by that derived operation,
we note that given any word in $\lhd$ and symbols from
$X,$~\eqref{d.end2} can indeed be used recursively to reduce it to
one in which parentheses are clustered to the right.
(To see formally that recursive application
of~\eqref{d.end2} must terminate, let us define
the {\em implicit length} of a $\!\lhd\!$-word $w$ in symbols from $X$
by letting the implicit length of each $x\in X$ be $1,$
and the implicit length of a word $w_1\lhd w_2$ be
the implicit length of $w_2,$ plus twice the implicit length of $w_1.$
We find that any application of~\eqref{d.end2} to
a subword of a $\!\lhd\!$-word leaves the word's implicit length
unchanged, but increases its length (number of occurrences
of variable-symbols); so, since the length is
bounded above by the implicit length, the process must terminate.)
We can, next, use \eqref{d.inv} recursively to eliminate
cases where $x_i=x_{i+1}$ for $i<n-1,$ and, finally, use~\eqref{d.idpt}
recursively to eliminate cases where $x_{n-1}=x_n,$
giving a word of the form~\eqref{d.x_0dots}.

To show uniqueness, note that given elements
$x_0,\dots,x_n$ in a group $G,$
the expression in~\eqref{d.x_0dots}, evaluated
in $\r{Core}(G),$ describes the group element
\begin{equation}\begin{minipage}[c]{35pc}\label{d.x_0dots_gp}
$x_0\ x_1^{-1}\ x_2 \dots\,x_{n-1}^{\mp 1}\,x_n^{\pm 1}\,
x_{n-1}^{\mp 1}\,\dots\,x_2\ x_1^{-1}\,x_0.$
\end{minipage}\end{equation}
Now if we take for $G$ the free group on the elements of $X,$
then by the condition in~\eqref{d.x_0dots} that
no two successive $x_i$ be equal,~\eqref{d.x_0dots_gp}
is a reduced word in that free group,
whose value in that group determines the sequence $x_0,\dots,x_n.$
So starting with an arbitrary $\!\lhd\!$-word in the elements
of $X,$ any two expressions of the form~\eqref{d.x_0dots}
obtainable from it using~\eqref{d.idpt},~\eqref{d.inv}
and~\eqref{d.end2} must be the same, which is the desired
uniqueness statement.

Returning to the claim whose verification we postponed,
suppose $u=v$ is an identity satisfied by $\lhd$ in all groups.
Applying~\eqref{d.idpt},~\eqref{d.inv} and~\eqref{d.end2}
as above, we can reduce $u$ and $v$
to words of the form~\eqref{d.x_0dots}.
Since we have assumed $u=v$ to hold identically in core quandles of
groups, the reduced expressions
have, in particular, the same value in the core quandle of the
free group on $X;$ so by the above uniqueness
result, they must be the same.
So the equality
$u=v$ is indeed a consequence of~\eqref{d.idpt},~\eqref{d.inv}
and~\eqref{d.end2}.
\end{proof}

This immediately yields the first assertion of

\begin{proposition}\label{P.free}
Let $X$ be a nonempty set.
Then the elements of the free group $\langl X\rangl$ of the
form~\eqref{d.x_0dots_gp}, i.e., the symmetric reduced group words
of odd length in the elements of $X,$
in which the exponents alternate between $+1$ and
$-1,$ starting with the former, form a subquandle of
$\r{Core}(\langl X\rangl)$ which is a free involutory quandle on $X.$

On the other hand, fixing an element $y$ of $X,$ the set of
{\em all} symmetric reduced group words in $X-\{y\}$
\textup{(}including the empty word $1,$ and with no
condition of odd length or alternating exponents\textup{)} forms
a subquandle of $\r{Core}(\langl X-\{y\}\rangl)$
which is a free involutory quandle on the set $X-\{y\}\cup\{1\}.$
\end{proposition}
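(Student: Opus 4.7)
The first assertion is immediate from Theorem~\ref{T.ids}, as noted. For the second, write $S$ for the claimed set and $F=\langl X-\{y\}\rangl$. First I would verify that $S$ is a subquandle of $\r{Core}(F)$: the inverse of a palindromic reduced word is palindromic, and any cancellations in a palindromic product $ab^{-1}a$ are symmetric, so its reduced form stays in $S$. Clearly $S$ contains the single-letter words $x$ (for $x\in X-\{y\}$) and the empty word $1_F$. The plan is to show that the free involutory quandle $Q$ on the abstract set $Y=(X-\{y\})\cup\{e\}$ (where $e$ is a new symbol) maps isomorphically onto $S$ via the quandle homomorphism $\hat\phi\!:Q\to S$ extending $e\mapsto 1_F$ and $x\mapsto x$.

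By Theorem~\ref{T.ids}, every element of $Q$ has a unique normal form $x_0\lhd(x_1\lhd(\cdots\lhd x_m)\cdots)$ with $x_i\in Y$ and $x_i\neq x_{i+1}$, whose $\hat\phi$-image is the word~\eqref{d.x_0dots_gp} with each $e$ interpreted as $1_F$. The crux is that no cancellations occur when this product is formed: two adjacent non-$e$ entries $x_i,x_{i+1}$ contribute letters of opposite exponent but distinct values (by the normal-form condition); two non-$e$ entries separated by a single $e$ (the only option, since two consecutive $e$'s are forbidden) contribute letters of the same exponent. Either way, no cancellation, so the image is already a reduced word and therefore lies in $S$.

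Injectivity of $\hat\phi$ then follows because the $k$-th entry in $(x_0,\ldots,x_m)$ produces a letter of exponent $(-1)^k$, so the sign-pattern of the reduced image, together with the ban on consecutive $e$'s, forces the indices $j_1<j_2<\cdots$ of the non-$e$ slots, and the letters at those slots then determine the whole normal form. For surjectivity, given $W=g_1^{\epsilon_1}\cdots g_L^{\epsilon_L}\in S$, I would build a preimage by placing $g_i$ at position $j_i$, with $j_1\in\{0,1\}$ chosen according to $\epsilon_1$ and $j_{i+1}-j_i\in\{1,2\}$ chosen according to whether $\epsilon_{i+1}=\epsilon_i$; the center $x_m$ is $g_{(L+1)/2}$ if $L$ is odd (with $m$ forced by parity) and $e$ if $L$ is even, and all remaining positions are $e$. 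The bounds on $j_{i+1}-j_i$ prevent adjacent $e$'s, and $W$ being reduced forces $g_i\neq g_{i+1}$ wherever two non-$e$ entries end up adjacent.

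The main obstacle is the bookkeeping near the palindrome's center --- matching $(-1)^m$ with the center letter's exponent in the $L$-odd case, handling $x_m=e$ when $L$ is even, and covering the degenerate case $W=1_F$ (normal form $(e)$). None of this is conceptually difficult once the parity-versus-exponent correspondence is set up.
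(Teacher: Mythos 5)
Your argument is correct, but for the second paragraph you take a genuinely different route from the paper. The paper derives the second assertion from the first by transport of structure: by Lemma~\ref{L.auts}(i) the right translation $w\mapsto wy^{-1}$ is an automorphism of $\r{Core}(\langl X\rangl),$ so it carries the free involutory quandle on $X$ (already identified in the first paragraph) to the subquandle generated by the elements $xy^{-1};$ these lie in the free subgroup on $\{xy^{-1}\mid x\in X-\{y\}\},$ which is then identified with $\langl X-\{y\}\rangl$ by $xy^{-1}\mapsto x.$ This makes freeness, injectivity, and the subquandle property automatic (isomorphic image of a free quandle), leaving only the identification of the image set, which is done by exactly your insertion rule with $y$ playing the role of your symbol $e$ (prepend and append $y$ when the word starts with exponent $-1,$ insert $y^{\mp1}$ between equal-exponent letters, take $w=y$ for the empty word). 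Your proof instead works directly inside $\langl X-\{y\}\rangl$: you apply Theorem~\ref{T.ids} to the free involutory quandle on $(X-\{y\})\cup\{e\},$ and verify by hand that evaluating a normal form with $e\mapsto 1$ produces no cancellation, that the exponent pattern recovers the positions of the non-$e$ slots (injectivity), and that every reduced palindrome arises (surjectivity, by the same insertion combinatorics). What your version buys is self-containedness --- it never mentions the ambient free group on all of $X$ or the translation automorphism --- at the cost of redoing the uniqueness/no-cancellation bookkeeping that the paper gets for free from the isomorphism; the combinatorial core (the parity-versus-exponent correspondence governing where the identity element must sit) is the same in both. The only point you flag as an ``obstacle,'' the behavior at the palindrome's center, is handled uniformly by your own observation that the letter in position $k$ of the length-$(2m+1)$ string carries exponent $(-1)^k,$ so no separate case analysis is really needed there.
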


\begin{proof}
The assertion of the first paragraph follows from the
proof of Theorem~\ref{T.ids}.
To deduce the second paragraph, note that by Lemma~\ref{L.auts}(i), the
endomap
\begin{equation}\begin{minipage}[c]{35pc}\label{d.w_to_wy-1}
$w\ \mapsto\ wy^{-1}$
\end{minipage}\end{equation}
of the underlying set of
$\langl X\rangl$ is an automorphism of $\r{Core}(\langl X\rangl);$
hence a free involutory subquandle of $\r{Core}(\langl X\rangl)$
is also generated by the elements $xy^{-1}$ $(x\in X).$
Now the elements $xy^{-1}$ $(x\in X-\{y\})$ form a free
generating set of a subgroup of $\langl X\rangl,$
and that subgroup, of course, also contains $yy^{-1}=1,$
hence it contains our translated free involutory quandle.
If we map this free subgroup into $\langl X\rangl$ by sending each free
generator $xy^{-1}$ $(x\in X-\{y\})$ to $x,$ we get an isomorphism of
that subgroup with $\langl X-\{y\}\rangl,$
and we see that the isomorphism of free involutory
quandles given by our translation followed by this map
fixes each member of $X-\{y\},$ and sends $y$ to $1.$

I now claim that the result of applying this quandle isomorphism
to all reduced words of the form~\eqref{d.x_0dots_gp} is the set
of all symmetric reduced group words in $X-\{y\}.$
Indeed, given a symmetric reduced word $u$ in $X-\{y\},$
we may obtain a $w$ as
in~\eqref{d.x_0dots_gp} which maps to it as follows.
On the one hand, if $u$ begins (and hence ends) with a symbol having
exponent $-1,$ append a $y$ at the beginning and a $y$ the end.
Further, wherever $u$ has two successive variable-symbols with
the same exponent $+1$ or $-1,$ insert a $y$ with the opposite
exponent between them.
(In particular, if $u$ has positive {\em even} length, a $y$
or $y^{-1}$ is inserted in the middle.)
Finally, if $u=1,$ let $w=y.$

That the resulting word $w$ has the form~\eqref{d.x_0dots_gp},
and is mapped to $u$ under the isomorphism described, is immediate.
\end{proof}

In the proof of Theorem~\ref{T.ids}, we
used the identity~\eqref{d.end2} to bring words
to a form with parentheses clustered to the right.
It is helpful to note a consequence
of that identity (of which~\eqref{d.end2} itself is the $n=2$ case),
which describes how such a right-clustered expression acts by $\lhd.$
\begin{equation}\begin{minipage}[c]{35pc}\label{d.end3}
$x_1\lhd(x_2\lhd( \dots \lhd(x_{n-1}\lhd x_n)\dots))\lhd y\ =\\
\hspace*{2em}x_1\lhd(x_2\lhd( \dots \lhd(x_{n-1}\lhd (x_n
\lhd(x_{n-1} \lhd( \dots \lhd(x_2\lhd(x_1\lhd y))\dots))))\dots)).$
\end{minipage}\end{equation}

This is straightforward to check in a quandle
of the form $\r{Core}(G),$ using the definition~\eqref{d.core},
and the fact that a quandle expression~\eqref{d.x_0dots}
corresponds to the group expression~\eqref{d.x_0dots_gp}.
From this, the same result for a general involutory quandle $Q$
follows by the first statement of Theorem~\ref{T.ids}, since
that says that identities holding
in every quandle $\r{Core}(G)$ hold in {\em all} involutory quandles.
Alternatively, one can
prove~\eqref{d.end3} inductively from~\eqref{d.end2}.

\section{Trajectories in involutory quandles}\label{S.traj}

As mentioned, I was led to the topic of this
note by thinking about {\em trajectories} in groups $G,$ that is,
sequences of the form $(x\,z^i)_{i\in\Z}$ $(x,z\in G),$ equivalently,
sequences $(x_i)_{i\in\Z}$ satisfying $x_{i+1}=x_i\,x_{i-1}^{-1}\,x_i.$
(It is easy to see that these can also be described
as sequences of the form $(w^i\,x)_{i\in\Z}$ $(w,x\in G),$
or more generally, as sequences of the form $(x\,y^i\,x')$
$(x,y,x'\in G).$
In the first two sorts of expression, $x$ and $z,$
respectively $w$ and $x,$ are clearly unique, while in the last,
$x,$ $y$ and $x'$ can be replaced by $xu,$ $u^{-1}yu,$ $u^{-1}x'$
for any $u\in G.)$
Noting the form such a sequence takes in the quandle $Q=\r{Core}(G),$
and abstracting to general involutory quandles, we make

\begin{definition}\label{D.traj2}
If $Q$ is an involutory quandle, then a sequence
$(x_i)_{i\in\Z}$ of elements
of $Q$ will be called a {\em trajectory} in $Q$ if it satisfies
\begin{equation}\begin{minipage}[c]{35pc}\label{d.traj+1}
$x_{i+1} \ = \ x_i\,\lhd\, x_{i-1}$ \ for all $i\in\Z,$
\end{minipage}\end{equation}
equivalently \textup{(}as one sees
by applying $x_i\lhd$ to both sides of~\eqref{d.traj+1}\textup{)}, if
\begin{equation}\begin{minipage}[c]{35pc}\label{d.traj-1}
$x_{i-1} \ = \ x_i\,\lhd\, x_{i+1}$ \ for all $i\in\Z.$
\end{minipage}\end{equation}

In particular, if $G$ is a group, the sequences of elements
that are trajectories in the involutory quandle $\r{Core}(G)$
are precisely those that are trajectories in the group-theoretic
sense in $G.$
\end{definition}

(What we are calling trajectories in an involutory
quandle are roughly what
are called ``geodesics'' in~\mbox{\cite[\S11]{qndl}}; except that
there, after involutory quandles have been defined,
the concept of a set $Q$ with geodesics is
defined independently, and then shown to yield a
structure of involutory quandle on~$Q.)$

If $(x_i)_{i\in\Z}$ is a trajectory in a group, then letting
$x=x_0,$ $y=x_1,$ we see that
\begin{equation}\begin{minipage}[c]{35pc}\label{d.x_n}
$x_i\ =\ x\,(x^{-1} y)^i.$
\end{minipage}\end{equation}
Given a trajectory $(x_i)_{i\in\Z}$ in an involutory quandle
$Q,$ we can likewise, using~\eqref{d.traj+1} and~\eqref{d.traj-1},
write all $x_i$ in terms of $x=x_0$ and $y=x_1,$ though the
description is not as simple as~\eqref{d.x_n}.
Let me just list the forms
of $x_{-3}$ through $x_4,$ from which the pattern is clear.
\vspace{.2em}
\begin{equation}\begin{minipage}[c]{35pc}\label{d.x_n2}
$\hspace*{2em}\dots\\
x_{-3}=\ x\lhd(y\lhd(x\lhd y))\\
x_{-2}=\ x\lhd(y\lhd x)\\
x_{-1}=\ x\lhd y\\
x_0\ \ =\ x\\
x_1\ \ =\ y\\
x_2\ \ =\ y\lhd x\\
x_3\ \ =\ y\lhd(x\lhd y)\\
x_4\ \ =\ y\lhd(x\lhd(y\lhd x))\\
\hspace*{2em}\dots$
\end{minipage}\end{equation}
\vspace{.2em}

Again, this family of formulas can be proved either by
establishing them in quandles $\r{Core}(G),$ where they are
translations of the corresponding cases of~\eqref{d.x_n},
or by direct computation (in which case~\eqref{d.end3} is helpful).
A generalization of~\eqref{d.traj+1} and~\eqref{d.traj-1},
which can likewise be shown in either of these ways to hold in
trajectories $(x_i)_{i\in\Z}$ in involutory quandles, is
\begin{equation}\begin{minipage}[c]{35pc}\label{d.traj.i,j}
$x_i\lhd x_j\ =\ x_{2i-j}$ \ for all $i,j\in\Z.$
\end{minipage}\end{equation}

Returning to~\eqref{d.x_n2}, note that the expressions on the
right are precisely the reduced expressions~\eqref{d.x_0dots}
for the elements of the subquandle of $Q$ generated
by $x$ and $y;$ so a trajectory is a certain enumeration of
a $\!2\!$-generator subquandle.
Let us prove

\begin{proposition}\label{P.inf}
Let $(x_i)_{i\in\Z}$ be a trajectory in an involutory quandle $Q.$
Then the following conditions are equivalent:\\[.2em]
\textup{(i)} \ \ The subquandle $\{x_i\mid i\in\Z\}$ of $Q$ is free
on the generators $x_0,$ $x_1.$\\[.2em]
\textup{(ii)} \ All $x_i$ are distinct.\\[.2em]
\textup{(iii)} $\{x_i\mid i\in\Z\}$ is infinite.
\end{proposition}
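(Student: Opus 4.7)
The plan is to show $\textup{(i)} \Rightarrow \textup{(ii)} \Rightarrow \textup{(iii)}$ trivially, and to close the cycle via the two substantive implications $\textup{(iii)} \Rightarrow \textup{(ii)}$ and $\textup{(ii)} \Rightarrow \textup{(i)}$. The implication $\textup{(ii)} \Rightarrow \textup{(iii)}$ is immediate, since an injective $\Z$-indexed sequence has infinite image. For $\textup{(i)} \Rightarrow \textup{(ii)}$, observe that the right-hand sides of~\eqref{d.x_n2} are already in the right-clustered normal form of~\eqref{d.x_0dots}, using only the symbols $x_0, x_1$ and with no two successive arguments equal; by Theorem~\ref{T.ids} they are pairwise distinct in a free involutory quandle on $\{x_0, x_1\}$, so they evaluate to pairwise distinct elements in any involutory quandle satisfying $\textup{(i)}$.

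The heart of the argument is $\textup{(iii)} \Rightarrow \textup{(ii)}$, which I prove by contrapositive. Suppose $x_i = x_j$ for some $i < j$, and set $k = j - i$. Applying~\eqref{d.traj.i,j} twice, one has $x_i \lhd x_n = x_{2i - n}$ and $x_j \lhd x_n = x_{2j - n}$ for every $n \in \Z$. Equating these, since $x_i = x_j$, gives $x_{2i - n} = x_{2j - n}$; substituting $m = 2i - n$ (so $2j - n = m + 2k$), this becomes $x_m = x_{m + 2k}$ for every $m \in \Z$. Hence the trajectory is $2k$-periodic and $\{x_i\}$ has at most $2k$ distinct elements, contradicting $\textup{(iii)}$.

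For $\textup{(ii)} \Rightarrow \textup{(i)}$, let $F$ be the free involutory quandle on two generators $a, b$, and let $(\tilde x_i)_{i \in \Z}$ be the trajectory in $F$ with $\tilde x_0 = a$, $\tilde x_1 = b$. The right-hand sides of~\eqref{d.x_n2} (with $x = a, y = b$) run through precisely the alternating normal-form expressions in $\{a, b\}$, so $\{\tilde x_i : i \in \Z\}$ equals the subquandle of $F$ generated by $a, b$, which is $F$ itself. The universal property gives a quandle homomorphism $\phi : F \to Q$ with $\phi(a) = x_0, \phi(b) = x_1$; by induction on $|i|$ using~\eqref{d.traj+1} and~\eqref{d.traj-1}, $\phi(\tilde x_i) = x_i$ for all $i \in \Z$. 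If all $x_i$ are distinct, then $\phi$ is injective on $F$, hence an isomorphism onto its image $\{x_i\}$, which gives $\textup{(i)}$.

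The main obstacle is the key step $\textup{(iii)} \Rightarrow \textup{(ii)}$: one must recognize that identity~\eqref{d.traj.i,j}, applied at a pair of coinciding indices, yields periodicity of the entire trajectory (not merely of one parity class), after which the rest is a routine computation.
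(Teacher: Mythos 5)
Your proposal is correct and follows essentially the same route as the paper: the equivalence of (i) and (ii) comes from the normal-form/uniqueness statement of Theorem~\ref{T.ids} applied to the enumeration~\eqref{d.x_n2} (you merely package the two directions separately, the second via the universal property of the free involutory quandle), and your contrapositive argument for (iii)$\implies$(ii) is exactly the paper's: apply~\eqref{d.traj.i,j} at a coincidence $x_i=x_{i+m}$ to conclude $x_j=x_{j+2m}$ for all $j,$ hence periodicity.
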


\begin{proof}
The equivalence of~(i) and~(ii) follows
from the last assertion of Theorem~\ref{T.ids}
in view of the enumeration~\eqref{d.x_n2}.
The implication (ii)$\!\implies\!$(iii) is immediate;
to complete the proof, it will
suffice to prove $\neg$(ii)$\implies\!\neg$(iii).
So suppose that for some $i\in\Z$ and $m>0$ we have
\begin{equation}\begin{minipage}[c]{35pc}\label{d.i+m}
$x_i\ =\ x_{i+m}.$
\end{minipage}\end{equation}
For any $j,$~\eqref{d.i+m} implies
$x_i\lhd x_{2i-j} = x_{i+m} \lhd x_{2i-j},$ which
by~\eqref{d.traj.i,j} translates to

\begin{equation}\begin{minipage}[c]{35pc}\label{d.j+2m}
$x_j\ =\ x_{j+2m}\,;$
\end{minipage}\end{equation}
so our trajectory is periodic, proving $\neg$(iii).
\end{proof}

When a trajectory satisfies the equivalent conditions
of Proposition~\ref{P.inf}, we
see that for each $n\geq 1,$ the number of terms whose
normal forms, shown in~\eqref{d.x_n2}, have length (number
of variable-symbols) $\leq n$ is exactly $2n.$
So intuitively, an infinite trajectory ``grows linearly''.
Curiously, this is not true if we allow non-reduced expressions.

\begin{proposition}\label{P.2^n}
Let $(x_i)_{i\in\Z}$ be a trajectory in an involutory quandle.
Then for every positive integer $n,$ the following sets are equal.

The set $A_n$ of elements expressible by
arbitrary $\!\lhd\!$-words of length $\leq n$ in $x_0$ and $x_1.$

The set $B_n$ of elements expressible by
$\!\lhd\!$-words of length exactly $n$ in $x_0$ and $x_1,$
with parentheses clustered on {\em the left}.

The set $C_n=\{x_i\mid -2^{n-1} < i\leq 2^{n-1}\}.$

Thus, if the trajectory $(x_i)_{i\in\Z}$ is infinite, then
for each $n,$ the common value of these sets has cardinality~$2^n.$
\end{proposition}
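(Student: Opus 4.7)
The plan is to reduce all three set-descriptions to arithmetic on indices via the identity $x_i \lhd x_j = x_{2i-j}$ from~\eqref{d.traj.i,j}. Since $\{x_i\mid i\in\Z\}$ is closed under $\lhd$, every $\lhd$-word $w$ in $x_0,x_1$ evaluates to some $x_{I(w)}$, where $I(x_0)=0$, $I(x_1)=1$, and $I(u\lhd v)=2I(u)-I(v)$. The task then reduces to determining the integer values $I(w)$ can take as $w$ ranges over the word classes defining $A_n$ and $B_n$.

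For $B_n$, I would unfold a left-clustered word $(\cdots((x_{i_1}\lhd x_{i_2})\lhd x_{i_3})\cdots)\lhd x_{i_n}$ with each $i_k\in\{0,1\}$ recursively, obtaining $I(w) = 2^{n-1}i_1 - 2^{n-2}i_2 - \cdots - i_n$. As $(i_1,\dots,i_n)$ runs over $\{0,1\}^n$, the values with $i_1=0$ sweep out $\{-(2^{n-1}-1),\dots,0\}$ via the ordinary binary expansion of $-I(w)$, and those with $i_1=1$ sweep out $\{1,\dots,2^{n-1}\}$; together they hit every integer in $(-2^{n-1},2^{n-1}]$ exactly once, giving $B_n = C_n$.

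For $A_n$ I would induct on $n$, with base $A_1 = \{x_0,x_1\} = C_1$. A word of length $\leq n$ either has length $\leq n-1$ (and so lies in $C_{n-1}\subseteq C_n$ by induction), or splits as $u\lhd v$ with lengths $a,b\geq 1$ summing to $n$; then the induction gives $I(u)\in(-2^{a-1},2^{a-1}]$ and $I(v)\in(-2^{b-1},2^{b-1}]$, so
\[
I(u\lhd v) \ =\ 2I(u)-I(v) \ \in\ \bigl(-2^a + 2 - 2^{b-1},\ 2^a+2^{b-1}-1\bigr]\cap\Z.
\]
Containment of this interval in $C_n$ reduces to the single inequality $2^a+2^{b-1}-1\leq 2^{a+b-1}$, immediate from $a,b\geq 1$. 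Since $B_n\subseteq A_n$ is trivial, combining the two inclusions yields $A_n=B_n=C_n$; the final cardinality claim for infinite trajectories then comes from Proposition~\ref{P.inf}, which ensures the $x_i$ are distinct, so that $|C_n|$ equals the number of integers in $(-2^{n-1},2^{n-1}]$, namely $2^n$.

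I expect the only real obstacle to be keeping the powers of two straight in the inductive arithmetic for $A_n$: one must verify that no adversarial split $a+b=n$ with $a,b\geq 1$ pushes $2I(u)-I(v)$ outside $C_n$, which boils down to the sharp inequality $2^a+2^{b-1}\leq 2^{a+b-1}+1$ (attained with equality whenever $b=1$).
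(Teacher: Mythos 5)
Your proposal is correct and follows essentially the same route as the paper: both arguments reduce everything to index arithmetic via $x_i\lhd x_j=x_{2i-j}$ from~\eqref{d.traj.i,j} and then induct on $n$ (the paper gets $B_n=C_n$ from the recursion $S_n=(S_{n-1}\lhd x_0)\cup(S_{n-1}\lhd x_1),$ which is just the unrolled form of your signed binary expansion, and its treatment of $A_n\subseteq C_n$ is the same interval estimate with slightly looser constants). Your sharp inequality $2^a+2^{b-1}\leq 2^{a+b-1}+1$ and the observation that the left-clustered words hit each index in $(-2^{n-1},2^{n-1}]$ exactly once are both right.
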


\begin{proof}
Trivially, $A_1 = B_1 = C_1 = \{x_0,\,x_1\};$ so let $n>1,$
and let us inductively assume the desired result for all lower $n.$

Note that by definition, $B_n=(B_{n-1}\lhd x_0)\cup(B_{n-1}\lhd x_1).$
(Here and below, a formula having a set as one or both
arguments of $\lhd$
denotes the set of outputs obtained using elements of
the indicated input-set(s).)
With the help of~\eqref{d.traj.i,j} we
likewise see that $C_n=(C_{n-1}\lhd x_0)\cup(C_{n-1}\lhd x_1).$
(Indeed, $C_{n-1}\lhd x_0$ consists of all $x_i$ with even $i$ in the
indicated range, and $C_{n-1}\lhd x_1$ of all $x_i$ with
odd $i$ in that range.)
Hence $B_n=C_n$ for all $n.$

By definition, $A_n\supseteq B_n,$ so it will suffice to show
that $A_n\subseteq C_n.$
For $n>1,$ all elements of $A_n$ that do not already
lie in $A_{n-1}$ must be members of sets $A_{n-m}\lhd A_m$ with
$1\leq m<n.$
If $m=1,$ then by the inductive
assumption that $A_{n-1}=C_{n-1},$ we are in exactly
the case of the preceding paragraph, and again get the
elements of $C_n.$
If $2\leq m<n,$ our inductive hypothesis implies that the
elements $x_i$ of our trajectory that lie
in $A_{n-m}$ satisfy $-2^{n-3}<i\leq 2^{n-3}$ (since $m\geq 2),$
while the $x_j$ that lie in $A_m$ satisfy
$-2^{n-2}<j\leq 2^{n-2}$ (since $m\leq n-1),$
whence the subscript $2i-j$ of $x_i\lhd x_j\ =\ x_{2i-j}$
will satisfy $-2^{n-1}<2i-j<2^{n-1},$
so in this case too, that element will lie in $C_n.$

The final assertion is clear, given
Proposition~\ref{P.inf}~(iii)$\implies$(i).
\end{proof}

Using the fact that in groups, trajectories have the form
$(x\,z^i)_{i\in\Z},$ we
can get information about a group $G$ from $\r{Core}(G):$

\begin{lemma}\label{L.nth-pwrs}
Given elements $x$ and $w$ of a group $G,$ and an integer $n,$
one can determine from the structure of $\r{Core}(G)$
whether $x^{-1}w$ is an $\!n\!$-th power in $G.$
Namely, this will hold if and only if there exists a
trajectory $(x_i)_{i\in\Z}$ in $\r{Core}(G)$ with $x_0=x$ and $x_n=w;$
equivalently, if and only if there exists $y\in \r{Core}(G)$
such that $w$ is given by the formula for $x_n$ as
in~\eqref{d.x_n2}.\qed
\end{lemma}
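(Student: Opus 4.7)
The plan is to reduce the claim to a short translation between the group-theoretic and quandle-theoretic descriptions of trajectories. The key tool is the final sentence of Definition~\ref{D.traj2}, which identifies trajectories in $\r{Core}(G)$ with trajectories in $G$ in the group-theoretic sense; by~\eqref{d.traj} and the discussion preceding Definition~\ref{D.traj2}, those are exactly the bi-infinite sequences $(x\,z^i)_{i\in\Z}$ for $x,z\in G$.

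Given this, the first ``iff'' in the lemma is immediate: a trajectory with $x_0=x$ and $x_n=w$ exists precisely when there is $z\in G$ satisfying $x\,z^0=x$ (automatic) and $x\,z^n=w$, i.e.\ $z^n=x^{-1}w$; so such a trajectory exists if and only if $x^{-1}w$ is an $n$-th power in $G$. (For $n=0$ the condition specializes to $x=w$, in keeping with the convention that $1$ is a $0$-th power; for negative $n$ one uses $z^n=(z^{-1})^{-n}$.) Since ``trajectory in $\r{Core}(G)$'' is formulated purely via $\lhd$ in~\eqref{d.traj+1}, this shows at once that being an $n$-th power is detectable from the structure of $\r{Core}(G)$.

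For the equivalent formulation in terms of $y$, I would appeal to~\eqref{d.x_n2}: a trajectory is determined by any two consecutive terms, so setting $x_0=x$ and letting $y=x_1$ vary over $\r{Core}(G)$ produces, as $x_n$, precisely the $\lhd$-word in $x$ and $y$ listed in~\eqref{d.x_n2}. Hence the condition ``some trajectory with $x_0=x,\ x_n=w$ exists'' is equivalent to ``some $y\in\r{Core}(G)$ makes the formula for $x_n$ in~\eqref{d.x_n2} equal to $w$'', which is the second description in the lemma.

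There is no substantial obstacle here; the content is essentially an unpacking of Definition~\ref{D.traj2} and the list~\eqref{d.x_n2}. The only bookkeeping point is to note that~\eqref{d.x_n2} really does cover all $n\in\Z$ in both directions, so that the same argument applies for negative indices without modification, which is why the author allows $n$ to range over all integers and marks the lemma with \verb|\qed|.
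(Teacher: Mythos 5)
Your argument is correct and matches the paper's intent exactly: the lemma is stated with a \qed because it is regarded as immediate from the identification of trajectories in $\r{Core}(G)$ with group trajectories $(x\,z^i)_{i\in\Z}$ and the formula~\eqref{d.x_n}, which is precisely the unpacking you carry out. No differences worth noting.
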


It follows in turn that we can tell from
$\r{Core}(G)$ whether $x^{-1}\,w$ is, say,
a product of squares in $G,$ since this is equivalent
to the existence of a sequence of elements
$x\,{=}\,x_{(0)},\,x_{(1)},\,\dots,\,x_{(n)}\,{=}\,w$ with each
$x_{(n-1)}^{-1}\,x_{(n)}$ a square.
Likewise, we see that the structure
of $\r{Core}(G)$ determines whether a property
such as ``every product of squares is a square'', or
``every product of two distinct squares in $G$ has cube the identity''
holds in the group $G.$
On the other hand, we shall see in Lemma~\ref{L.cm_ncm} that
one cannot always tell from the structure
of $\r{Core}(G)$ whether $G$ is abelian.

\section{Orbits of involutory quandles}\label{S.cpnts}

A very degenerate class of quandles $\r{Core}(G)$ is noted in

\begin{lemma}\label{L.exp2}
If $G$ is a group, then the identity
\begin{equation}\begin{minipage}[c]{35pc}\label{d.triv}
For all $x,y\in G,$ $x\lhd y\ =\ y$
\end{minipage}\end{equation}
holds in $\r{Core}(G)$
if and only if $G$ satisfies the identity $x^2=1,$
i.e., has exponent~$2.$
\end{lemma}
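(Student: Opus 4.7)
The plan is to prove both directions by a direct calculation using the definition $x\lhd y = x\,y^{-1}\,x$ of the core operation.

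For the forward direction, I would assume that $x\,y^{-1}\,x = y$ holds identically in $G$, and specialize to the single instance $y = 1$. This immediately yields $x^2 = 1$ for all $x\in G$, so $G$ has exponent~$2$.

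For the converse, I would first recall the standard fact that a group of exponent~$2$ is abelian: from $(xy)^2 = 1$ together with $x^2 = y^2 = 1$ one gets $xy = y^{-1}x^{-1} = yx$. Moreover, each element is self-inverse, i.e., $y^{-1} = y$. Then $x\,y^{-1}\,x = x^2\,y^{-1} = y^{-1} = y$, as required.

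I do not expect any real obstacle here; the whole argument is a three-line manipulation once the two observations (specializing to $y=1$, and exponent~$2$ $\Rightarrow$ abelian with $y^{-1}=y$) are in hand.
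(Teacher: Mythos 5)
Your proof is correct and is essentially the same direct verification the paper gives; the paper merely condenses both directions into one step by rewriting $x\,y^{-1}x=y$ as $(x\,y^{-1})^2=1$ and noting this holds for all $x,y$ iff every element squares to $1$, whereas you handle the converse via the (equally standard) fact that exponent-$2$ groups are abelian with $y^{-1}=y$.
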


\begin{proof}
Condition~\eqref{d.triv} translates to the group-theoretic
identity $x\,y^{-1} x = y,$ equivalently, $(x\,y^{-1})^2 = 1,$
which clearly holds for all $x,y\in G$ if and only if $x^2=1$
for all $x\in G.$
\end{proof}

If an involutory quandle $T$ satisfies~\eqref{d.triv},
then, of course, every subset of $T$ is a subquandle.
Hence given a homomorphism from an involutory quandle $Q$
to such a $T,$ the inverse image of every subset of $T$ is
a subquandle of $Q.$

Every involutory quandle $Q$ has a universal homomorphic
image satisfying~\eqref{d.triv}, whose elements are
the equivalence classes of elements of $Q$ under
the quandle congruence~$\sim$ generated by relations
\begin{equation}\begin{minipage}[c]{35pc}\label{d.eq_gen}
$x\lhd y~\sim~y$\quad $(x,y\in Q).$
\end{minipage}\end{equation}

With the help of~\eqref{d.inv} it is easy
to show that this congruence has the form
\begin{equation}\begin{minipage}[c]{35pc}\label{d.eq}
$x\sim y\iff (\exists\,z_1,\dots,z_n\in  Q)
\ \ y=z_1\lhd(\dots\lhd (z_n\lhd x)\dots).$
\end{minipage}\end{equation}
These equivalence classes are called the {\em orbits} of $Q.$
(The term is used, more generally, in \cite{aut-q}
for the equivalence classes in not necessarily
involutory quandles determined by the equivalence relation
generated by~\eqref{d.eq_gen}.)
We see that the union of any family of orbits is a subquandle of $Q.$

However, in contrast to the case
described in Lemma~\ref{L.exp2}, the structure of a general quandle $Q$
is not determined by
the separate quandle structures of its orbits: Though
each map $x\lhd -$ $(x\in Q)$ takes every orbit $Q_0$ of $Q$
into itself, if $x$ is not in $Q_0,$
the involution $x\lhd -$ on $Q_0$ carries information not
determined by the $\!\lhd\!$-structure of $Q_0.$

Instead of constructing subquandles $Q'$ of $Q$ by
letting each orbit of $Q$ either wholly belong to $Q'$
or be wholly absent, can we put together a $Q'$ by choosing
subquandles of the various orbits
of $Q$ more or less independently?
Specifically, suppose we start with $\r{Core}(G)$
for $G$ a group, and let $N$ be the normal subgroup of $G$
generated by the squares, so that $G/N$ is
the universal exponent-\!$2$\! image of $G.$
Can we get a subquandle of $\r{Core}(G)$ whose
intersections with the various cosets of $N$ include cosets
of distinct subgroups of $N$?
For instance, can we do this when $G$ is an infinite
cyclic group $\langl x\rangl$?
There, $N=\langl x^2\rangl,$ so the two orbits are the sets of
even and odd integers.

The answer turns out to be no in that case, but yes for
some other $G.$

The negative answer for
$Q=\r{Core}(\langl x\rangl)$ follows from the fact, not hard to
see, that in that quandle, the elements of every nonempty
subquandle $Q'$ form a subtrajectory.
(Idea: If $Q'$ has more than one
element, choose distinct $x^i,\,x^j\in Q'$ so as
to minimize $|i-j|,$ and show that the existence of
an element $x^k$ not in the subtrajectory they generate
would contradict that minimality.)
For such a subtrajectory-determined subquandle $Q',$ the
set $\{i\mid x^i\in Q'\}$ either consists entirely of even integers,
or consists entirely of odd integers,
or the sets of even and of odd elements
are cosets of a common subgroup of $\Z.$

But for an example where more interesting things can happen,
let $G$ be the infinite dihedral group
$\langl x,\,y\mid y^2=1,\ y^{-1}\,x\,y=x^{-1}\rangl.$
It is easy to check that each coset of the subgroup
$\langl x\rangl\subseteq G$ has trivial $\!\lhd\!$-action on the other:
\begin{equation}\begin{minipage}[c]{35pc}\label{d.gi,gjh}
$x^i\,\lhd\,(x^j y)\ =\ x^j y$\quad and \quad
$(x^j y)\,\lhd\,x^i\ =\ x^i$ \quad $(i,j\in\Z).$
\end{minipage}\end{equation}

Hence the union of any subquandle
of one coset with any subquandle
of the other gives a subquandle of $\r{Core}(G);$
and those subquandles
can, independently, each be a nontrivial subtrajectory,
or a singleton, or empty.

(The cosets of $\langl x\rangl$
are not actually the orbits of $\r{Core}(G);$
each is the union of two such orbits.
But each coset of $\langl x\rangl$ is
a trajectory, so, as discussed above,
the intersections of a subquandle of $\r{Core}(G)$ with the two
orbits comprising one of these cosets have much less freedom.)

\section{Which involutory quandles embed in core quandles of groups?}\label{S.embed}

Not every involutory quandle has the form $\r{Core}(G).$
For instance, letting $G$ be a group of exponent~$2,$
we have noted that every subset of $\r{Core}(G)$ is a subquandle.
But if such a subset has
finite cardinality not a power of $2,$ that subquandle clearly
cannot be isomorphic to $\r{Core}(H)$ for any group $H.$

Is every involutory quandle at least {\em embeddable} in one of the
form $\r{Core}(G)$?

No.
A hint of what can go wrong was seen in the proof of
Proposition~\ref{P.inf},
where for a trajectory satisfying a relation $x_i=x_{i+m},$
we did not deduce $x_j=x_{j+m}$ for all $j,$ as is clearly
true in a group-theoretic trajectory, but only $x_j=x_{j+2m}.$
The next result analyzes that behavior in detail;
in Proposition~\ref{P.fix_fix} we note the consequences
for embeddability of involutory quandles in core quandles.

\begin{proposition}\label{P.fin_traj}
Let $Q$ be an involutory quandle,
and $(x_i)_{i\in\Z}$ a trajectory in $Q$ in which not all
terms are distinct.
Then for some positive integer $n,$
\begin{equation}\begin{minipage}[c]{35pc}\label{d.i+n}
$x_i\ =\ x_{i+n}$ for all $i\in\Z.$
\end{minipage}\end{equation}

Let $n$ be the {\em least}
positive integer for which~\eqref{d.i+n} holds.
Then exactly one of the following is true.\\[.2em]
\textup{(i)} \ \ $x_i=x_j$ if and only if
$i\equiv j~(\r{mod}\,n).$\\[.2em]
\textup{(ii)} \ $n$ is a multiple of $4,$ and for $i,j\in\Z$
we have $x_i=x_j$ if and only if either $i$ and $j$ are both
odd, and are congruent modulo $n/2,$ or they are both
even, and are congruent modulo $n.$\\[.2em]
\textup{(ii\!$'$\!)}\, Like \textup{(ii)}, but with ``even'' and
``odd'' interchanged.\vspace{.2em}

Moreover, for each of \textup{(i), (ii), (ii\!$'$\!)},
and all values of $n$ with the indicated properties, there
exist trajectories $(x_i)_{i\in\Z}$ in involutory quandles
$Q$ of the sort described.
\end{proposition}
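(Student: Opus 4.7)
The plan is to split the proof into a classification part (establishing the trichotomy and the condition $4\mid n$) and a construction part (producing examples for each case).

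For the classification, I invoke Proposition~\ref{P.inf} to conclude that the subgroup $P = \{m\in\Z : x_i = x_{i+m}\text{ for all } i\}$ of $\Z$ is nonzero; let $n$ be its positive generator, which is~\eqref{d.i+n} with $n$ minimal. For each $k\in\Z$, set $E_k = \{i\in\Z : x_k = x_{k+i}\}$. The argument already used in the proof of Proposition~\ref{P.inf} shows $x_k = x_{k+i}$ forces $2i\in n\Z$, so $E_k\subseteq(n/2)\Z$ when $n$ is even and $E_k\subseteq n\Z$ when $n$ is odd. Combined with $n\Z\subseteq E_k$, and the observation that a single $i\in E_k$ with $i\equiv n/2\pmod n$ drags in the whole coset $n/2+n\Z$ (via $n\in P$), this gives $E_k\in\{n\Z,(n/2)\Z\}$, the second option being possible only when $n$ is even.

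The key step is showing that $E_k = (n/2)\Z$ depends only on the parity of $k$. If $x_k = x_{k+n/2}$, apply $x_j\lhd$ to both sides and use~\eqref{d.traj.i,j} to get $x_{2j-k} = x_{2j-k-n/2}$; as $j$ varies, $2j-k$ ranges over all integers of the same parity as $k$, so $E_\ell=(n/2)\Z$ for every such $\ell$. If this held for both parities simultaneously, i.e.\ $x_0 = x_{n/2}$ and $x_1 = x_{1+n/2}$, then the recurrence~\eqref{d.traj+1} would propagate $x_i = x_{i+n/2}$ to all $i$, contradicting the minimality of $n$. So exactly one of (i) ($E_k = n\Z$ for all $k$), (ii) ($E_k = (n/2)\Z$ precisely on odd $k$), or (ii$'$) (the reverse) holds. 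In (ii) and (ii$'$), the equation $x_k = x_{k+n/2}$ relates indices of the same parity, so $n/2$ must be even and hence $4\mid n$.

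For existence, case (i) with any $n\geq 1$ is realized by $x_i = i\in\r{Core}(\Z/n\Z)$, since $2i-(i-1) = i+1$. For cases (ii) and (ii$'$) with $n=4m$, my construction is to form the quotient of $\r{Core}(\Z/n\Z)$ by the equivalence identifying odd (respectively even) residues that differ by $n/2$; the image of the trajectory $x_i = i$ then realizes the desired case. The check that this relation is a quandle congruence is the main concrete computation: for identified $i,i'$ with $i' = i+n/2$, the identity $2(n/2)\equiv0\pmod n$ makes the right-actions agree ($i\lhd y = i'\lhd y$), while the left-actions on any $j$ give $2j-i$ and $2j-i-n/2$, both of the same parity as $i$ (since $2j$ and $n/2$ are even) and differing by $n/2$, so they are again identified. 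I expect the main obstacle to be the parity-propagation step in the classification, where~\eqref{d.traj.i,j} converts a single periodicity relation into a whole family; once that is in hand, the existence portion reduces to routine affine arithmetic in $\Z/n\Z$.
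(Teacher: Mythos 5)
Your proposal is correct and takes essentially the same route as the paper: the same appeal to the implication $x_i=x_{i+m}\Rightarrow x_j=x_{j+2m}$ from the proof of Proposition~\ref{P.inf} to pin the possible periods down to $n$ and $n/2$, the same parity-propagation of the half-period relation via~\eqref{d.traj.i,j}, the same derivation of $4\mid n$, and the same realization of cases (ii) and (ii$'$) as parity-restricted quotients of $\mathrm{Core}(\Z/n\Z)$. Your period sets $E_k$ are just a repackaging of the paper's function $r(x)$ (least distance between occurrences), so no substantive difference remains.
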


\begin{proof}
\eqref{d.i+n} holds for some $n$ by the
implication \eqref{d.i+m}$\!\implies\!$\eqref{d.j+2m}
in the proof of Proposition~\ref{P.inf}.
Let $n$ be the least such value.

For each $x\in\{x_i\mid i\in\Z\},$ let $r(x)$ be the least
distance between occurrences of $x$ in our trajectory,
i.e., the least $m>0$ such that for some $i,$ $x_i=x=x_{i+m}.$
Note that if $r(x)\neq n,$ so that $x$ occurs more than once
in a cycle of length $n,$ then we must have $r(x)\leq n/2.$

On the other hand, again calling on the implication
\eqref{d.i+m}$\!\implies\!$\eqref{d.j+2m}, we see that
$x_i=x_{i+2r(x)}$ for all $i,$ so by our choice of $n,$ $n$
is a divisor of $2\,r(x),$ so $n\leq 2\,r(x),$ i.e., $r(x)\geq n/2.$
In view of the conclusion of the
preceding paragraph, this says that if $r(x)\neq n,$ then $r(x)=n/2.$
So for each $x,$ either $r(x)=n,$ in which case $x$
occurs periodically with period $n,$ or $r(x)=n/2,$ so $x$
must occur with period $n/2.$
(Of course, the latter is only possible if $n$ is even.)

Assuming $r(x)=n/2,$ let $x=x_i,$ and let us
apply to the relation $x_i=x_{i+n/2}$ the operator $x_{i+1}\lhd.$
By~\eqref{d.traj.i,j} we get $x_{i+2}=x_{i+2-n/2};$ so we must
also have $r(x_{i+2})=n/2.$
Thus, for $j\in\Z,$ whether $r(x_j)$ is $n$ or $n/2$ can
only depend on the parity of $j.$

We will have established the main assertion of the proposition once
we say why we can't have $r(x_i)=n/2$ for both odd and even $i,$ and
why $n$ must be a multiple of $4$ (and not just an even integer)
in cases~(ii) and~(ii\!$'$\!).
The former point is trivial: if $r(x_i)$ were $n/2$ for both
odd and even $i,$ then for all $i$ we would have $x_i=x_{i+n/2},$
so $n/2,$ not $n,$ would be the least period of $(x_i)_{i\in\Z}.$
To see the other point, note that if $x_i=x_{i+n/2},$ then
$r$ has the value $n/2$ at both $x_i$ and $x_{i+n/2}.$
If $n/2$ were odd, this would mean that both odd- and even-indexed
elements satisfied $r(x)=n/2,$ which we have just noted is
impossible.

It remains to show that all the cases of~(i),~(ii) and~(ii\!$'$\!)
do occur.
For every $n,$ the quandle \mbox{$\r{Core}(\langl x\mid x^n=1\rangl)$}
gives a trajectory~$(x^i)_{i\in\Z}$ as in~(i).
If $n$ is a multiple of $4,$ it is straightforward to verify
that the equivalence relation on the above quandle which
identifies $x^i$ with $x^{i+n/2}$ when and only when $i$
is odd (respectively, even) is a congruence on that
quandle, giving examples of~(ii) and~(ii\!$'$\!) respectively.
(Note that $n$ must be a multiple of $4$ for our description
of this quandle to make sense, i.e., for
$i$ and $i+n/2$ to be of the same parity.)
\end{proof}

Incidentally, note that from a trajectory as
in~(ii) above, one gets a trajectory as in~(ii\!$'$\!)
by shifting the indexing by $1,$ and vice versa; hence
the presence of one sort in a given $Q$
is equivalent to the presence of the other.
So below, we shall only refer to trajectories of the former sort.

\begin{proposition}\label{P.fix_fix}
The following conditions on an involutory quandle $Q$ are
equivalent.\\[.2em]
\textup{(i)}\, $Q$ has no finite trajectories of the
sort described in Proposition~\ref{P.fin_traj}\textup{(ii)}.\\[.2em]
\textup{(ii)} If $(x_i)_{i\in\Z}$ is a trajectory in $Q$
and $i,$ $j,$ $m$ are integers, then
$x_i=x_{i+m}\iff x_j=x_{j+m}.$\\[.2em]
\textup{(i$\!'\!$)}\, $Q$ has no finite trajectories of the
sort described in Proposition~\ref{P.fin_traj}\textup{(ii)}
with $n$ a power of $2.$\\[.2em]
\textup{(ii$\!'\!$)} If $(x_i)_{i\in\Z}$ is a trajectory in $Q$ and
$m\geq 2$ is a power of $2,$ then $x_0=x_m\implies x_1=x_{m+1}.$
\vspace{.2em}

Moreover, every involutory quandle $Q$ that is embeddable in
the core quandle of a group satisfies the above equivalent conditions.
\end{proposition}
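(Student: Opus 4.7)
The plan is to invoke Proposition~\ref{P.fin_traj} to classify the trajectories of $Q$: each is either infinite, of type~(i), or of type~(ii) (absorbing type~(ii$'$) via the shift-by-one remark following that proposition). For infinite and type-(i) trajectories the relation $x_i = x_{i+m}$ depends only on $m$, so condition~(ii) of the present proposition holds automatically; thus~(ii) can fail only through a type-(ii) trajectory, giving (i)$\,\Leftrightarrow\,$(ii). The implications (i)$\,\Rightarrow\,$(i$'$) and (ii)$\,\Rightarrow\,$(ii$'$) are trivial, so it remains to prove (i$'$)$\,\Rightarrow\,$(i) and (ii$'$)$\,\Rightarrow\,$(i$'$).

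For (i$'$)$\,\Rightarrow\,$(i), I plan to subsample a given type-(ii) trajectory down to one of power-of-$2$ period. Let $(x_i)$ be type-(ii) of minimal period $n = 2^k m$ with $m$ odd; since $4\mid n$ we have $k\geq 2$. Set $y_i = x_{mi}$. Using~\eqref{d.traj.i,j}, $y_i\lhd y_{i-1} = x_{2mi - m(i-1)} = x_{m(i+1)} = y_{i+1}$, so $(y_i)$ is itself a trajectory. Since $m$ is odd, $mi$ has the parity of $i$, and the congruence pattern of the $x$'s transfers: for even $i,j$, $y_i=y_j$ iff $i\equiv j\pmod{2^k}$; for odd $i,j$, iff $i\equiv j\pmod{2^{k-1}}$. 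Thus $(y_i)$ is type-(ii) with minimal period $2^k$, a power of $2$, contradicting~(i$'$). For (ii$'$)$\,\Rightarrow\,$(i$'$) (contrapositive), a type-(ii) trajectory with $n$ a power of $2$ satisfies $x_1 = x_{1+n/2}$ (its odd-indexed terms have period $n/2$) yet $x_2 \neq x_{2+n/2}$ (even-indexed terms have period $n$); a shift of indices by $1$ then produces a trajectory violating~(ii$'$) with $m = n/2 \geq 2$.

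For the final assertion, any trajectory in a subquandle of $\r{Core}(G)$ is also a trajectory in $\r{Core}(G)$, hence by Definition~\ref{D.traj2} has the form $x_i = xz^i$ for some $x,z\in G$; the equation $xz^i = xz^{i+m}$ reduces to $z^m = 1$, a condition independent of $i$, establishing~(ii).

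The main obstacle is the subsampling step in (i$'$)$\,\Rightarrow\,$(i): one must simultaneously check that $y_i = x_{mi}$ is again a trajectory and that its period structure remains of type~(ii) with minimal period exactly $2^k$. The other implications then reduce to routine parity and congruence bookkeeping, while the group-embedding direction is immediate once a trajectory is rewritten in the $(xz^i)$ form.
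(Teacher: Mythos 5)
Your proposal is correct and follows essentially the same route as the paper: the equivalences (i)$\iff$(ii) and (i$'$)$\iff$(ii$'$) via the classification in Proposition~\ref{P.fin_traj} (with the index-shift observation relating the $m$ of (ii$'$) to half the $n$ of (i$'$)), the subsampled trajectory $(x_{mi})_{i\in\Z}$ with $m$ the odd part of $n$ to get (i$'$)$\Rightarrow$(i), and the reduction of both $x_0=x_m$ and $x_1=x_{m+1}$ to $z^m=1$ for trajectories $(xz^i)$ in $\r{Core}(G)$. You spell out the verification that $(x_{mi})$ is again a trajectory of the excluded type, which the paper leaves implicit, but the argument is the same.
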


\begin{proof}
In the light of Proposition~\ref{P.fin_traj}, it is clear that
(i)$\iff$(ii) and (i$\!'\!$)$\iff$(ii$\!'\!$) (where
the $m$ of (ii$\!'\!$) is half the $n$ of (i$\!'\!$)), and clearly
the former conditions imply the latter conditions.
Conversely, suppose $Q$ fails to satisfy~(i), i.e.,
has a trajectory $(x_i)_{i\in\Z}$ of the
sort described in Proposition~\ref{P.fin_traj}(ii).
Then writing the period $n$ of that description as $m\,n'$
where $m$ is odd and $n'$ is a power of $2$ (which will be $\geq 4),$
we see that $(x_{m\,i})_{i\in\Z}$ will be a trajectory
of period $n'$ of the sort excluded by (i$\!'\!$) above.

Finally, since in a quandle of the form $\r{Core}(G),$
every trajectory $(x_i)_{i\in\Z}$ has the form $(x\,z^i)_{i\in\Z},$
the conditions $x_0=x_m$ and $x_1=x_{m+1}$ both come
down to $z^m=1,$ from which all of (i)-(ii$\!'\!$) are clear.
\end{proof}

\begin{question}\label{Q.fix_fix}
Are the equivalent conditions of Proposition~\ref{P.fix_fix}
sufficient, as well as necessary, for an involutory
quandle $Q$ to be embeddable in the core quandle of a group?
\end{question}

Digressing from the main subject of this paper, we end this section
with some observations on not-necessarily-involutory quandles, and a
question on these, parallel to Question~\ref{Q.fix_fix}.

\begin{lemma}\label{L.traj_in_noninv}
For the remainder of this section, we shall call a sequence
$(x_i)_{i\in\Z}$ of elements of a {\em not necessarily involutory}
quandle $Q$ a trajectory if it satisfies~\eqref{d.traj+1};
equivalently, if it satisfies the analog of~\eqref{d.traj-1} with
$\lhd^{-1}$ in place of $\lhd.$

If $(x_i)_{i\in\Z}$ is a trajectory in a quandle $Q,$ then\\[.2em]
\textup{(a)}  Every index-translate $(x_{i+r})_{i\in\Z}$ of
$(x_i)_{i\in\Z}$ is again a trajectory.\\[.2em]
\textup{(b)} Writing $x_0=x,$ $x_1=y,$ the formulas of~\eqref{d.x_n2}
for $x_i$ with $i\geq 0$ hold, while
those with $i<0$ become true if $\lhd$
is everywhere replaced by $\lhd^{-1}.$\\[.2em]
\textup{(c)}  For all $i,$ $x_{i+2} = y\lhd (x\lhd x_i).$\\[.2em]
\textup{(d)}  For all integers $i,$ $j$ and $n,$ we have
$x_i = x_j \iff x_{i+2n} = x_{j+2n}.$\\[.2em]
\textup{(e)}  If $Q$ is embeddable in $\r{Conj}(G)$ for $G$ a group,
then for all integers $i,$ $j$ and $n,$ we have
$x_i = x_j \iff x_{i+n} = x_{j+n}.$
\end{lemma}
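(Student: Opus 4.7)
The five parts are best tackled in the order (a), (c), (b), (d), (e): part (c) provides the main tool used to derive (b) and (d), while (e) rests on a separate structural observation specific to $\r{Conj}(G)$.

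Part (a) is immediate, since the defining relation $x_{i+1}=x_i\lhd x_{i-1}$ holds for all $i\in\Z$ and so is preserved by reindexing. For (c), the plan is to introduce the map $\phi\colon Q\to Q$ given by $\phi(z)=y\lhd(x\lhd z)$. Since in any quandle each left-translation is an automorphism (by~\eqref{d.bij} and~\eqref{d.end}), $\phi$ is an automorphism of $(Q,\lhd)$; consequently $(\phi(x_i))_{i\in\Z}$ is again a trajectory. I then verify $\phi(x_0)=x_2$ and $\phi(x_1)=x_3$ directly: the first collapses by~\eqref{d.idpt} to $y\lhd x$, and the second is the identity $y\lhd(x\lhd y)=(y\lhd x)\lhd y$, which follows from~\eqref{d.end} together with~\eqref{d.idpt} applied to $y\lhd y$. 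Since any trajectory is uniquely determined by two consecutive terms, this forces $\phi(x_i)=x_{i+2}$ for all $i\in\Z$, which is the content of (c).

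Part (b) is a direct consequence of (c): starting from $x_0=x$, $x_1=y$ and iterating the identity $x_{i+2}=y\lhd(x\lhd x_i)$ unwinds the alternating-parentheses forms of~\eqref{d.x_n2} for $i\geq 0$. For $i\leq 0$, one first computes $x_{-1}=x\lhd^{-1}y$ from the relation $x_1=x_0\lhd x_{-1}$ and then iterates the inverse map $\phi^{-1}(z)=x\lhd^{-1}(y\lhd^{-1}z)$ to recover the corresponding formulas with $\lhd$ replaced by $\lhd^{-1}$. Part (d) is equally immediate: by (c), $x_{i+2n}=\phi^n(x_i)$, and since $\phi^n$ is a bijection, $x_i=x_j\iff x_{i+2n}=x_{j+2n}$.

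For (e), I plan to use the explicit form of conjugation in $\r{Conj}(G)$: the relation $x_{i+1}=x_i\,x_{i-1}\,x_i^{-1}$ rearranges as $x_{i+1}\,x_i=x_i\,x_{i-1}$, so the product $c:=x_i\,x_{i-1}$ is independent of $i$, and $x_{i+1}=c\,x_i^{-1}$ is determined by $x_i$ alone. Hence $x_i=x_j$ forces $x_{i+1}=c\,x_i^{-1}=c\,x_j^{-1}=x_{j+1}$, and induction on $|n|$ combined with the symmetric implication going backward yields the equivalence for all $n\in\Z$; injectivity of an embedding $Q\hookrightarrow\r{Conj}(G)$ then transfers the statement back to $Q$. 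The main obstacle I foresee is the base case $\phi(x_1)=x_3$ of (c): in the non-involutory setting one cannot invoke the right-clustering identity~\eqref{d.end2}, so the needed rearrangement has to be carried out directly from~\eqref{d.end} and~\eqref{d.idpt}; once that step is in place, the remaining claims each follow from a single structural observation.
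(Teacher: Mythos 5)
Your proof is correct. Parts (a)--(d) are in essentially the same spirit as the paper's: the paper also rests (b) and (d) on the observation that $y\lhd(x\lhd -)$ is an automorphism of $Q$, though it verifies (c) by direct inspection of the four cases $i<-1,$ $i=-1,$ $i=0,$ $i>0$ rather than by your route of noting that $(\phi(x_i))_{i\in\Z}$ and $(x_{i+2})_{i\in\Z}$ are trajectories agreeing in two consecutive terms, hence everywhere; your version is a little cleaner, and your computation of $\phi(x_1)=x_3$ from~\eqref{d.end} and~\eqref{d.idpt} is exactly the needed base case. Where you genuinely diverge is part (e). The paper reduces to $n=1,$ $i=0,$ writes out $x_j$ and $x_{j+1}$ explicitly as group words $(yx)^m y\,(yx)^{-m}$ etc.\ according to the parity of $j,$ and checks that the two equations $x_0=x_j$ and $x_1=x_{j+1}$ each reduce to the same group relation. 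Your argument instead extracts the invariant $c:=x_{i+1}x_i,$ constant along the trajectory since $x_{i+1}=x_ix_{i-1}x_i^{-1}$ rearranges to $x_{i+1}x_i=x_ix_{i-1},$ so that $x_{i+1}=c\,x_i^{-1}$ and $x_i=x_{i+1}^{-1}c$ are each determined by the other; the equivalence $x_i=x_j\iff x_{i+1}=x_{j+1}$ is then immediate, with no case split on parity and no normal-form bookkeeping. This is shorter and conceptually sharper (it shows the shift map $x_i\mapsto x_{i+1}$ is a well-defined bijection of the set of trajectory values), at the cost of not exhibiting the explicit closed forms of the $x_i$ in $G$ that the paper's computation produces as a by-product.
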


\begin{proof}
(a)~is immediate from the above definition of a trajectory, and~(b)
is easily proved by induction, with the help of the
fact that a common string $y\lhd(x\lhd(y\lhd\dots))$
or $x\lhd^{-1}(y\lhd^{-1}(x\lhd^{-1}\dots))$
with which two successive terms of~\eqref{d.x_n2}
begin acts on $Q$ by an automorphism,
by~\eqref{d.bij} and~\eqref{d.end}.
(c)~is quickly verified by looking separately at
the four cases $i<-1,$ $i=-1,$ $i=0$ and $i>0.$
Since $y\lhd (x\lhd -)$ is an automorphism of $Q,$ (d)~follows from~(c).

In proving~(e), it suffices to establish the case $n=1.$
Moreover, we can assume without loss of generality that $i<j,$
and then, using~(a), assume $i=0.$
Thus, what we must prove is the equivalence, for $j>0,$
of $x_0 = x_j$ with $x_1 = x_{j+1}.$

If $j$ is odd, say $j=2m+1,$ these equations, expressed using
the operations of $G,$ become
$x=(yx)^m y\,(yx)^{-m}$ and $y=(yx)^m y x y^{-1} (yx)^{-m}.$
If, on the other hand, $j=2m,$ they become
$x = (yx)^{m-1} y x y^{-1} (yx)^{-(m-1)}$ and $y=(yx)^m y\,(yx)^{-m}.$
In each case, the equivalence of the two group-theoretic
relations is straightforward:
Each of the first pair of equations reduces
(on bringing all negative-exponent terms to the opposite
side, and cancelling equal end-terms if these
occur) to $x (yx)^m = (yx)^m y;$
each of the second pair to $(xy)^m = (yx)^m.$
\end{proof}

It is not clear to me how natural the concept of trajectory is
in non-involutory quandles.
When $Q=\r{Conj}(G),$ a trajectory in $Q$ will not,
in general, be a trajectory in the group $G$
as defined in~\eqref{d.traj}.
The terms of a trajectory in a quandle $Q$
do not, in general, comprise a subquandle
(e.g., they do not, in general, include $y\lhd(y\lhd x)).$
In particular, trajectories do not,
in general, satisfy~\eqref{d.traj.i,j}.
The failure of that condition means that if $(x_i)_{i\in\Z}$ is a
trajectory,
then for $n\neq 1,$ $(x_{ni})_{i\in\Z}$ will not in general be one.
(Indeed, it will not be one for $n=-1$ if $x\lhd y \neq x\lhd^{-1} y.)$

Nevertheless, point~(e) of the above lemma suggests
the following analog of Question~\ref{Q.fix_fix}.

\begin{question}\label{Q.fix_fix_Conj}
Suppose $Q$ is a \textup{(}not necessarily involutory\textup{)}
quandle such that for all trajectories $(x_i)_{i\in\Z}$ in $Q$
and all positive integers $j,$ we have
\begin{equation}\begin{minipage}[c]{35pc}\label{d.fix_fix_noninv}
$x_0 = x_j\iff x_1 = x_{j+1}.$
\end{minipage}\end{equation}
Must $Q$ be embeddable in $\r{Conj}(G)$ for some group $G$?
\end{question}

We remark that for
trajectories in quandles obtained from groups $G$ by the
formula~\eqref{d.conj_d} with $|d|>1,$~\eqref{d.fix_fix_noninv}
need {\em not} hold.
For example, one finds that in such a quandle, the left-hand equation of
the $j=2$ case of~\eqref{d.fix_fix_noninv} says
that in $G,$ $y^d$ commutes with $x,$ and the right-hand equation says
that $x^d$ commutes with $y;$
but if we take for $G$ a group having elements $x$ and $y$ which do
not commute, and such that $x$ has order prime to $d,$ while
$y$ has order dividing $d,$ then for these $x$ and $y,$ the first of
the above conditions clearly holds, while the second fails.

I do not know whether there are interesting conditions on quandles
that {\em are} implied by embeddability in quandles so obtained
for values of $d>1.$
(The conditions satisfied for $d$ and for $-d$ can be obtained from
each other by interchanging $\lhd$ and $\lhd^{-1},$ so the
cases with negative $d$ don't have to be examined separately.)

\section{More on mapping involutory quandles into core quandles of groups}\label{S.to_gp}

Proposition~\ref{P.fix_fix} gives us restrictions on
involutory quandles embeddable in quandles of the form $\r{Core}(G).$
Nevertheless, there is a natural homomorphism of any involutory
quandle into a core quandle,
which often does a good job of separating elements.

\begin{proposition}\label{P.rep}
Let $Q$ be any involutory quandle, and $\r{Perm}(Q)$ the group
of all permutations of the set $Q.$
For $x\in Q,$ define $\overline{x}\in\r{Perm}(Q)$ by
\begin{equation}\begin{minipage}[c]{35pc}\label{d.rep}
$\overline{x}(a)\ =\ x\lhd a$\quad $(a\in Q).$
\end{minipage}\end{equation}
Then $x\mapsto\overline{x},$ is a quandle homomorphism
$Q\to\r{Core}(\r{Perm}(Q)).$

If $Q$ above has the form $\r{Core}(G)$ for a group $G,$ then
elements $x,\,x'\in Q$ fall together under this homomorphism
if and only if, as members of $G,$ they belong to the same coset
of the group of elements of exponent $2$ in the {\em center} of~$G.$
\end{proposition}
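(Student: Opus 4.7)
The plan has two parts: verifying that $x\mapsto\overline{x}$ is a well-defined quandle homomorphism, and then, under the hypothesis $Q=\r{Core}(G)$, computing its fibres.

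\emph{Homomorphism part.} First, $\overline{x}$ is a bijection of $Q$ because identity~\eqref{d.inv} gives $\overline{x}\circ\overline{x}=\r{id}_Q$, making $\overline{x}$ self-inverse in $\r{Perm}(Q)$. In $\r{Core}(\r{Perm}(Q))$ the operation is $f\lhd g = fg^{-1}f$, so using $\overline{y}^{-1}=\overline{y}$, the required equation $\overline{x\lhd y}=\overline{x}\lhd\overline{y}$ becomes $\overline{x\lhd y}=\overline{x}\,\overline{y}\,\overline{x}$. Evaluating both sides on an arbitrary $a\in Q$ reduces this to $(x\lhd y)\lhd a = x\lhd(y\lhd(x\lhd a))$, which is precisely identity~\eqref{d.end2} from Lemma~\ref{L.redef}. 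No further quandle axiom is needed.

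\emph{Fibres for $Q=\r{Core}(G)$.} Here $\overline{x}(a)=xa^{-1}x$, so, substituting $b=a^{-1}$, the condition $\overline{x}=\overline{x'}$ is equivalent to $xbx=x'bx'$ for every $b\in G$. Taking $b=1$ gives $x^2=x'^2$. Next, replacing $b$ by $x^{-1}c$ for arbitrary $c\in G$ and cancelling, the relation becomes $c\,(x\,x'^{-1}) = (x'\,x^{-1})\,c$ for every $c\in G$. Setting $c=1$ yields $x\,x'^{-1}=x'\,x^{-1}$; call this common element $k$. The previously obtained relation then reads $ck=kc$ for every $c\in G$, so $k$ is central. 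From $k = x\,x'^{-1}$ and $k = x'\,x^{-1}$ I obtain $x = kx'$ and $x' = kx$, so $x = k^2 x$ and hence $k^2 = 1$; thus $k$ is a central element of exponent dividing $2$. Conversely, if $x' = kx$ with $k$ central and $k^2 = 1$, then $x'bx' = (kx)b(kx) = k^2\,xbx = xbx$ by centrality, so $\overline{x} = \overline{x'}$.

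The only real bookkeeping issue is that once $k$ is shown central with $k^2 = 1$, the four candidates $x\,x'^{-1}$, $x'\,x^{-1}$, $x^{-1}x'$, $x'^{-1}x$ all coincide, making the statement ``$x^{-1}x'$ lies in the subgroup of central elements of exponent $2$'' unambiguous; this set is plainly closed under multiplication (by centrality) and under inversion (since each element is self-inverse), hence is a subgroup, and so ``$x,x'$ belong to the same coset of it'' is the promised description.
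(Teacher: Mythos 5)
Your proof is correct and follows essentially the same route as the paper: the homomorphism property is verified via~\eqref{d.end2} together with the fact that each $\overline{y}$ is an involution, and the fibre computation extracts the same central element $k=x\,x'^{-1}=x'\,x^{-1}$ of exponent $2$ from the relation $\overline{x}=\overline{x'}$. The only cosmetic difference is that you deduce $k^2=1$ directly from $x=k^2x$, where the paper uses a conjugation-by-$x$ argument; both are equally valid.
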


\begin{proof}
That the maps $\overline{x}: Q\to Q$ are invertible,
i.e., belong to $\r{Perm}(Q),$
is property~\eqref{d.bij}.
(Though we did not make~\eqref{d.bij}
part of our characterization of involutory quandle
in Lemma~\ref{L.redef}, we
noted that it follows from~\eqref{d.inv}, which says
that every map $\overline{x}$ has exponent~$2.)$

To check that $x\mapsto\overline{x}$ is a homomorphism of quandles,
let $x,y\in Q.$
Then we see (using~\eqref{d.end2} at
the second step below, and the fact that $\overline{y}$ has
exponent $2$ at the fourth) that for all $z\in Q,$
\begin{equation}\begin{minipage}[c]{35pc}\label{d.rep.hm}
$(\overline{x\lhd y})(z)\ =
\ (x\lhd y)\lhd z\ =
\ x\lhd(y\lhd(x\lhd z))\ = \\[.2em]
\hspace*{2em}(\overline{x}\ \overline{y}\ \overline{x})(z)\ =
\ (\overline{x}\,\overline{y}^{-1}\overline{x})(z)\ =
\ (\overline{x}\lhd\overline{y})(z),$
\end{minipage}\end{equation}
so $\overline{x\lhd y} = \overline{x}\lhd\overline{y},$ as required.

To get the last assertion of the proposition, note that for
$x,\,x'\in G,$ we have $\overline{x}=\overline{x'}$
if and only if all $y\in G$ satisfy
$x\,y^{-1} x = x' y^{-1} x'.$
Multiplying on the left by $x'^{-1}$ and on the right by $x^{-1},$
this becomes
\begin{equation}\begin{minipage}[c]{35pc}\label{d.xx'y}
$x'^{-1} x\,y^{-1}\ =\ y^{-1} x'\,x^{-1}.$
\end{minipage}\end{equation}

Taking $y=1$ in~\eqref{d.xx'y} gives
\begin{equation}\begin{minipage}[c]{35pc}\label{d.xx'}
$x'^{-1} x\ =\ x' x^{-1}.$
\end{minipage}\end{equation}
Hence~\eqref{d.xx'y} says that the common
value of the two sides of~\eqref{d.xx'} is central in $G.$
Hence, in particular, the right-hand side of~\eqref{d.xx'}
is unaffected by conjugation by $x;$
but the result of that conjugation is the inverse of the left-hand
side, so the common value of the two sides also has exponent $2,$
giving the ``only if'' direction of the desired statement.
The ``if'' direction is straightforward.
\end{proof}

(The map $\overline{x}$ of~\eqref{d.rep}, for $Q$ a
not necessarily involutory quandle, is called
in~\cite[Definition~1.1]{qndl} the {\em symmetry} $S(x)$ of $Q$ at $x;$
it is an automorphism of $Q.)$
% the subgroup of $\r{Aut}(Q)$ generated by these symmetries is there
% named $\r{Inn}(Q),$ the group of {\em inner} automorphisms of $Q.$

If elements of an involutory quandle $Q$ fall together under the
map $Q\to\r{Core}(\r{Perm}(Q))$ of Proposition~\ref{P.rep},
this may be because $Q$ cannot be embedded in
the core quandle of a group, as is the case for
the trajectories of Proposition~\ref{P.fin_traj}\textup{(ii)};
or that may not be so, as we see from the
last paragraph of Proposition~\ref{P.rep}.

To avoid ``unnecessary falling-together'',
one can try to embed $Q$ in a larger
involutory quandle $Q',$ such that even if
two elements $x\neq x'$ satisfy $x\lhd y = x'\lhd y$ for all $y\in Q,$
this equality fails for some $y\in Q',$
so that Proposition~\ref{P.rep} yields
a representation of $Q'$ that distinguishes them.
When $Q$ has the form $\r{Core}(G)$ for some group $G,$
this will always work: construct a group $H$ by adjoining to $G$
one new generator $z$ and no relations.
Then nonidentity elements of $G$ will not centralize $z,$ so in
$\r{Perm}(\r{Core}(H))$ the cases of elements of $G$ falling
together as described in Proposition~\ref{P.rep} become trivial.

Given an arbitrary involutory quandle $Q,$ there will similarly
exist a universal involutory quandle $Q'$ generated by an image
of $Q$ and one additional generator $z.$
If we could find a normal form for elements of this $Q'$
in terms of $Q,$ we could use it to tell
which pairs of elements $x,\,x'$ fall together under
all maps into core quandles of groups.
(Namely, if and only if $x\lhd z=x'\lhd z.)$
But I do not see how to get such a normal form.
Obviously, we can reduce any element of $Q'$ to an
expression~\eqref{d.x_0dots} in elements of $Q\cup\{z\}.$
But the identities of involutory quandles will imply further
equalities among such expressions.
For instance, suppose we have an expression
$\ldots \lhd(x_i\lhd(x_{i+1}\lhd(\dots)))\dots$
with $x_i$ and $x_{i+1}$ both coming from $Q.$
Let us use~\eqref{d.inv} in reverse, to insert
two terms $x_i$ after $x_{i+1},$ getting an expression
$\dots\lhd(x_i\lhd(x_{i+1}\lhd(x_i\lhd(x_i\lhd(\dots)))))\dots,$
then apply~\eqref{d.end2} to the first three of the terms shown.
Then our element becomes
$\dots \lhd(x'_{i+1}\lhd(x_i\lhd(\dots)))\dots,$
where $x'_{i+1}=x_i\lhd x_{i+1}\in Q.$
For another example: if five successive terms $x_i,\dots,x_{i+4}$
all come from $Q$ and satisfy
$x_i=x_{i+2}=x_{i+4},$ then we can apply~\eqref{d.end2} either
to $x_i,\,x_{i+1},\,x_{i+2},$ or to $x_{i+2},\,x_{i+3},\,x_{i+4},$
getting different reductions of our expression.

Contrast this with the case of the {\em group} gotten by adjoining a new
generator $z$ to an arbitrary group $G.$
This has a normal form consisting of all
alternating strings of nonidentity elements of $G$ and
nonzero powers of $z,$
from which one quickly sees that no nonidentity
element of $G$ is central in the new group.

The reader might find it interesting to examine the case
where $Q$ is the involutory quandle
of Proposition~\ref{P.fin_traj}(ii) with $n=4,$
consisting of the three elements $x_0,$ $x_1=x_3,$ and $x_2,$
and see how the axioms for
an involutory quandle force $x_0\lhd z = x_2\lhd z.$
(Outline: In $x_2\lhd z,$ substitute
$x_1\lhd x_0$ for $x_2,$ and expand the result using~\eqref{d.end2}.
Write the last $x_1$ in the resulting expression
as $x_0\lhd x_1$ and again expand by~\eqref{d.end2}.
Then apply~\eqref{d.inv} twice.
This $\!3\!$-element quandle is called $\r{Cs}(4)$
in~\cite[next-to-last paragraph of {\S}6]{qndl}.)

By general nonsense
(see \cite[Exercise~9.9:8, or better, Theorem~10.4:3]{245})
one can associate to any involutory quandle
$Q$ a group $\r{Group}(Q)$ with a universal
involutory quandle homomorphism $Q\to \r{Core}(\r{Group}(Q)).$
The pairs $x,\,x'$ of elements of $Q$
that fall together under this homomorphism will be
those that fall together under all homomorphisms to core
quandles of groups.
But, as in the approach of adjoining a universal $z$
to $Q$ as an involutory quandle, it is not clear how to get
a good enough picture of $\r{Group}(Q)$ to detect such pairs.

Incidentally, the abovementioned universal homomorphism
$Q\to\r{Core}(\r{Group})(Q)$ can never be surjective.
To see this, take any nontrivial group $G$ and any $g\in G-\{1\}.$
Then a homomorphism $c_g: Q\to \r{Core}(G)$ is given by
the constant map $c_g(x)=g$ $(x\in Q).$
By the universal property of $\r{Group}(Q),$ $c_g$ must
factor $Q\to \r{Core}(\r{Group}(Q))\to \r{Core}(G),$ where the second
map is induced by some group homomorphism $\r{Group}(Q)\to G.$
Since $c_g$ takes no element of $Q$ to $1\in G,$
our map $Q\to\r{Core}(\r{Group}(Q))$ cannot take any element
of $Q$ to $1\in\r{Core}(\r{Group}(Q)),$ and so cannot be surjective.

Returning to our observation that every involutory quandle $Q$ that
can be embedded in the core quandle of a group $G$ can in fact be
embedded in $\r{Core}(\r{Perm}(H))$ for an appropriate
overgroup $H$ of $G,$ by sending each $x\in G$ to
the permutation~\eqref{d.rep} of the underlying set of $H,$
note that each of these permutations~\eqref{d.rep} has exponent~$2.$
We record this, along with some straightforward observations, in
the next result (where $\r{Inv}$ stands for ``set of involutions'').

\begin{proposition}\label{P.exp_2}
For any group $G,$ the elements of exponent $2$ in $G$ form
a subquandle $\r{Inv}(G)$ both of $\r{Core}(G)$ and of $\r{Conj}(G),$
on which the restrictions of the operations of those two quandles agree.

An involutory quandle $Q$ can be embedded in the core
quandle of a group $G$ if and only if it can be embedded in
the involutory quandle $\r{Inv}(H)$ for some group $H.$ \qed
\end{proposition}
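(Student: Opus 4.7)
My plan is to prove the two assertions of the proposition in order, each by assembling pieces already developed in the paper.

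For the first assertion, I would observe that if $x, y \in G$ both have exponent $2$, then $x^{-1} = x$ and $y^{-1} = y$, so the $\r{Core}$-expression $xy^{-1}x$ and the $\r{Conj}$-expression $xyx^{-1}$ both reduce to $xyx$; this settles at once both the agreement of the two operations on $\r{Inv}(G)$ and the fact that the value of either operation can be computed inside the subset $\r{Inv}(G)$. Closure under this common operation is then confirmed by the one-line computation $(xyx)^2 = xy \cdot x^2 \cdot yx = xy^2x = x^2 = 1$.

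For the second assertion, the ``if'' direction is immediate: given an embedding of $Q$ into $\r{Inv}(H)$, composing with the subquandle inclusion $\r{Inv}(H) \hookrightarrow \r{Core}(H)$ provided by the first assertion yields the desired embedding into a core quandle. For the ``only if'' direction, I would formalize the construction sketched in the paragraphs preceding the proposition. Given $Q \hookrightarrow \r{Core}(G)$, I would form the overgroup $H := G * \langl z \rangl$ (the free product of $G$ with an infinite cyclic group on one new generator $z$) and take the composite
\[
Q \;\hookrightarrow\; \r{Core}(G) \;\hookrightarrow\; \r{Core}(H) \;\xrightarrow{\,x \mapsto \overline{x}\,}\; \r{Core}(\r{Perm}(H)),
\]
where the last arrow is the quandle homomorphism of Proposition~\ref{P.rep}. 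Each $\overline{x}$ is an involution by~\eqref{d.inv}, so the image of the composite in fact lies in the subquandle $\r{Inv}(\r{Perm}(H))$.

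The single step requiring genuine argument, and hence the main obstacle, is the injectivity of this composite. By the second paragraph of Proposition~\ref{P.rep} applied to $\r{Core}(H)$, two elements of $H$ become identified under $x \mapsto \overline{x}$ exactly when they differ by a central involution of $H$. Thus it suffices to check that $H$ has no nontrivial central involutions, and this is where the free-product choice of $H$ is essential: assuming $G$ is nontrivial (the case $|Q| \leq 1$ being trivial), the free product $G * \langl z \rangl$ has trivial center. The resulting embedding $Q \hookrightarrow \r{Inv}(\r{Perm}(H))$ completes the argument.
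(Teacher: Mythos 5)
Your proposal is correct and follows essentially the same route the paper intends: the paper marks this proposition \verb|\qed| precisely because the first assertion is the direct computation you give, and the second assertion is the construction sketched in the preceding paragraphs (adjoin a free generator $z$ to $G$, apply the homomorphism $x\mapsto\overline{x}$ of Proposition~\ref{P.rep}, and note that the overgroup has no nontrivial central involutions, so no falling-together occurs). Your write-up just makes that sketch explicit, including the correct use of the last paragraph of Proposition~\ref{P.rep} for injectivity.
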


(Cf.~\cite[Corollary~10.3]{qndl}, which shows that the
free involutory quandle on an $\!A\!$-tuple of elements embeds
naturally in $\r{Conj}(G),$ for $G$ the group presented
by an $\!A\!$-tuple of elements of exponent~$2.$
From this the second
assertion of our Theorem~\ref{T.ids} can be deduced.
More generally,~\cite{qndl} defines an {\em $\!n\!$-quandle}
to be a quandle in which the $\!n\!$-th power of each
derived operation $x\lhd$ is the identity, with
the special case $n=2$ being named an involutory quandle.
The cited corollary gives the above result
for $\!n\!$-quandles, and $G$ presented by an $\!A\!$-tuple
of generators of exponent~$n.)$

Proposition~\ref{P.exp_2} shows that if an involutory quandle can be
embedded in a quandle $\r{Core}(G),$ it can also
be embedded in a quandle $\r{Conj}(H).$
I don't know whether the converse is true:

\begin{question}\label{Q.Inv}
Can every {\em involutory} quandle $Q$
that is embeddable in $\r{Conj}(G)$
for some group $G$ be embedded in $\r{Inv}(H)$ for some group $H$?
Equivalently \textup{(}by the final statement
of Proposition~\ref{P.exp_2}\textup{)}
can any such $Q$ be embedded in the core quandle of a group?
\end{question}

An involutory subquandle of $\r{Conj}(G)$ that
is, in general, larger than $\r{Inv}(G)$ is the set of
elements whose squares are {\em central}.
This set also forms a subquandle of $\r{Core}(G),$ but the two
quandle structures are, in general, distinct (easily seen
in the case $G=Z).$

We remark that for $G$ a group, we can get an embedding
of $\r{Core}(G)$ in a
quandle of the form $\r{Inv}(H)$ as in the second statement of
Proposition~\ref{P.exp_2} using a
group $H$ that is not as enormous as
the permutation group of the underlying set of the group gotten by
freely adjoining a new element $z$ to $G,$
discussed in the third paragraph following Proposition~\ref{P.rep}.
To motivate the description of such an $H,$
let $G^*$ be the result of adjoining such a $z$ to $G.$
Note that the $\!\lhd\!$-actions on $G^*$ of elements of $G$
carry into itself the subset $G\,\{z,\,z^{-1}\}\,G,$ and
that their actions on that subset belong to the subgroup of
$\r{Perm}(G\,\{z,\,z^{-1}\}\,G)$ generated by left multiplication by
members of $G,$ right multiplication by
members of $G,$ and the operation~$(\ )^{-1}.$
That subgroup is isomorphic to the semidirect
product $Z_2 \ltimes (G\times G),$ where the nonidentity element
of $Z_2,$ which we shall
denote $u,$ acts on $G\times G$ by interchanging the factors.
Namely, we let elements of the form $(1,x,y)\in Z_2 \ltimes (G\times G)$
act on $G\,\{z,\,z^{-1}\}\,G$ by
$h\mapsto x\,h\,y^{-1},$ and let $u$ act by $h\mapsto h^{-1}.$
Thus, for $x\in G,$ $x\lhd\!-$ is represented by
$(u,x,x^{-1})\in Z_2 \ltimes (G\times G).$
It is straightforward to verify (without calling on
the above motivation) that, in the notation
of Proposition~\ref{P.exp_2},
\begin{equation}\begin{minipage}[c]{35pc}\label{d.G->H}
$x\ \mapsto\ (u,x,x^{-1})$ \quad is an embedding of
involutory quandles \quad
$\r{Core}(G)\to \r{Inv}(Z_2 \ltimes (G\times G)).$
\end{minipage}\end{equation}

In another direction,
let us note a property of quandles of the form $\r{Core}(G)$
which follows immediately from Lemma~\ref{L.auts}(i).

\begin{lemma}\label{L.trans}
If $G$ is a group, then the automorphism group
of $\r{Core}(G)$ is transitive on the underlying set
of that quandle.\qed
\end{lemma}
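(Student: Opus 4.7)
The plan is to apply Lemma~\ref{L.auts}(i) directly: right-multiplication by any fixed group element is an automorphism of $\r{Core}(G)$, and these automorphisms are already enough to realize transitivity.

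Given any two elements $x,\,x'\in G$, set $g = x^{-1}x'$. Then the map $\varphi_g: y \mapsto y\,g$ is an automorphism of $\r{Core}(G)$ by Lemma~\ref{L.auts}(i), and $\varphi_g(x) = x\,(x^{-1}x') = x'$. Since $x$ and $x'$ were arbitrary, the automorphism group acts transitively on the underlying set of $\r{Core}(G)$.

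There is no real obstacle here; the only thing to check is that Lemma~\ref{L.auts}(i) has already been established (it has, and the verification is the routine identity $(y\,g)\lhd(y'\,g) = (y\,g)(y'\,g)^{-1}(y\,g) = y\,y'^{-1}y\,g = (y\lhd y')\,g$). Thus the proof is a single line invoking that lemma.
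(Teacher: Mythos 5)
Your proof is correct and matches the paper exactly: the lemma is stated there as an immediate consequence of Lemma~\ref{L.auts}(i), with right translation by $x^{-1}x'$ carrying $x$ to $x'$. Nothing further is needed.
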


An easy example of an involutory quandle whose automorphism group
is not transitive is the $\!3\!$-element quandle of
Proposition~\ref{P.fin_traj}(ii) with $n=4$
(mentioned in the fourth paragraph before
Proposition~\ref{P.exp_2} above).
The element $x_1=x_3$ is fixed under all the operations $x\lhd -,$
while the elements $x_0$ and $x_2$ are not; so no automorphism
of $Q$ can carry $x_1=x_3$ to $x_0$ or $x_2.$

The above example is a homomorphic image of a quandle
of the form $\r{Core}(G).$
A quandle $Q$ which is, rather, embeddable in one
of the form $\r{Core}(G),$ but again
does not have transitive automorphism group, is the
case of the example in the paragraph containing~\eqref{d.gi,gjh}
where, as subsets of the cosets $\langl x\rangl$ and
$\langl x\rangl y,$ we take all of one coset,
and a singleton subset of the other.
Then $Q$ consists of an infinite trajectory
together with a lone element which
belongs only to trajectories of $\leq 2$ elements;
so no automorphism can carry that element to any other.

\section{Identities in groups and their core quandles}\label{S.ids}

If a group $G$ satisfies nontrivial identities
(identities not implied by the
identities defining groups), this can lead to
nontrivial identities on the involutory quandle $\r{Core}(G).$
We saw this in an extreme way in Lemma~\ref{L.exp2};
let us examine some other examples.

\begin{theorem}\label{T.ids_ab}
The identities satisfied by $\r{Core}(G)$ for all
{\em abelian} groups $G$ are the consequences \textup{(}given the
defining identities~\eqref{d.idpt},~\eqref{d.inv}
and~\eqref{d.end2}\textup{)} of the identity
\begin{equation}\begin{minipage}[c]{35pc}\label{d.id_ab}
$w\lhd(x\lhd(y\lhd z))\ =\ y\lhd(x\lhd(w\lhd z)).$
\end{minipage}\end{equation}
\end{theorem}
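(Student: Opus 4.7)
The plan is modeled on the proof of Theorem~\ref{T.ids}. The easy direction is to check that \eqref{d.id_ab} holds in every $\r{Core}(G)$ with $G$ abelian: in additive notation, $x\lhd y=2x-y$, so both sides of \eqref{d.id_ab} evaluate to $2w-2x+2y-z$, which is plainly symmetric in $w$ and $y$.

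For the converse, given an alphabet $X$, define the evaluation map $\varphi$ on $\lhd$-terms in $X$ by $\varphi(x)=e_x$ (the standard generator of $\Z^X$) and $\varphi(s\lhd t)=2\varphi(s)-\varphi(t)$. Routine checks show that \eqref{d.idpt}, \eqref{d.inv}, \eqref{d.end2} and \eqref{d.id_ab} are all satisfied in $\r{Core}(\Z^X)$, so $\varphi$ descends to a quandle homomorphism from the free algebra $F$ on $X$ in the variety defined by the four identities, into $\r{Core}(\Z^X)$. Injectivity of this descent completes the proof: any identity $u=v$ valid in $\r{Core}(G)$ for all abelian $G$ in particular yields $\varphi(u)=\varphi(v)$ in $\Z^X$, hence $u=v$ in $F$, i.e., $u=v$ is a consequence of the four stated identities.

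To establish injectivity, I would build a canonical normal form. By Theorem~\ref{T.ids}, every element of $F$ has a right-clustered representative $u_0\lhd(u_1\lhd(\dots\lhd u_n))$ with no two consecutive $u_i$ equal. Applying \eqref{d.id_ab} to the subword $u_i\lhd(u_{i+1}\lhd(u_{i+2}\lhd r))$ (with $r$ playing the role of $z$) interchanges the entries at positions $i$ and $i+2$, available whenever $i+2\leq n-1$. If a swap creates two adjacent equal entries, collapse them via \eqref{d.inv} (or via \eqref{d.idpt} when the collapse involves the final position). Fixing a total order on $X$, I would iterate swap-and-collapse to reach an expression of minimum length whose even-indexed entries and odd-indexed entries are each in non-decreasing order.

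The main obstacle is showing this canonical form is uniquely determined by the element of $F$. My approach would be to read off invariants from $\varphi(u)=\sum_{i<n}2(-1)^i e_{u_i}+(-1)^n e_{u_n}$: the coefficient of $e_x$ in $\varphi(u)$ is odd precisely when $x=u_n$, so the final entry is the unique variable with odd coefficient; the remaining coefficients $c_x/2$ give the signed occurrence counts $\sum_{i<n,\,u_i=x}(-1)^i$. Together with the minimum-length and sort conventions, these invariants should pin down the canonical form, the combinatorial crux being a confluence analysis of the swap-and-collapse rewriting. Once confluence is established, two $\lhd$-terms with equal $\varphi$-value share the same canonical form, so are equal in $F$, and injectivity of the descent follows.
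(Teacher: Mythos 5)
Your strategy is essentially the paper's: verify \eqref{d.id_ab} directly in abelian groups, then for completeness evaluate right-clustered words in the free abelian group on $X$ and recover combinatorial data from the coefficients. The one place you diverge is in how you organize the completeness step, and there you have made the problem look harder than it is: the confluence analysis you defer as ``the combinatorial crux'' is not needed, and your own invariant computation already closes the argument. Call a right-clustered word \emph{reduced} if no letter of $X$ occupies both an even and an odd position. Every word can be brought to reduced form by the four identities (a swap via \eqref{d.id_ab} can place an even-position and an odd-position occurrence of the same letter adjacently, whereupon \eqref{d.inv} or \eqref{d.idpt} shortens the word, so the process terminates), and for a reduced word your invariants --- parity of the coefficient of $e_x$ detects the final letter, its sign detects the parity of the positions where $x$ occurs, its magnitude detects the occurrence count --- determine the length, the final letter, and the multiset of letters in each parity class of nonfinal positions. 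Since \eqref{d.id_ab} realizes every permutation within each parity class of nonfinal positions (the transpositions of positions $i$ and $i+2$ generate them), any two reduced words with the same $\varphi$-value are already interconvertible by \eqref{d.id_ab}; no sorting convention, unique canonical form, or confluence argument is required. This is exactly how the paper proceeds. The only point you should make explicit is the one your ``minimum length'' convention silently relies on: it is reducedness (no letter in both parities) that lets you pass from the signed counts $\sum_{i<n,\,u_i=x}(-1)^i$ back to the actual occurrence counts, and a representative in which some letter did occur in both parities would not be pinned down by these invariants.
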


\begin{proof}
That~\eqref{d.id_ab} holds in $\r{Core}(G)$ when $G$ is
abelian is immediate.
(Cf.\ the group-theoretic expansion~\eqref{d.x_0dots_gp}
of the general involutory quandle expression ~\eqref{d.x_0dots}.)

To see that the only identities holding in all
such quandles are the consequences of~\eqref{d.id_ab},
first note that given an expression
$x_0\lhd(x_1\lhd( \dots \lhd(x_{n-1}\lhd x_n)\dots)),$
where the $x_i$ are symbols in a set $X,$ we can,
using~\eqref{d.id_ab}, rearrange in any way the $x_i$
having {\em even} subscripts $i<n,$ and likewise
rearrange in any way the $x_i$ having {\em odd} subscripts $i<n.$
In particular, if some $x\in X$ occurs in both even and
odd positions, we can rearrange the terms so that these
occurrences of $x$ appear in adjacent positions,
and then use~\eqref{d.idpt} or~\eqref{d.inv} to shorten the word.
(We use~\eqref{d.idpt} if one of these occurrences of
$x$ is $x_n,$ so that it
was the other occurrence that had to be moved to become
adjacent to it;~\eqref{d.inv} if neither occurrence is $x_n,$
so that one, the other, or both could be moved to make them adjacent.)

Now suppose that $u=v$ is an identity in symbols from $X$ satisfied by
$\r{Core}(G)$ for all abelian groups $G.$
Using~\eqref{d.end2} we can assume without loss of generality
that in both $u$ and $v,$ parentheses are clustered to the right, while
using~\eqref{d.id_ab}, \eqref{d.idpt}, and~\eqref{d.inv} as
above, we can assume that in each of these words, no member
of $X$ occurs in both even-subscripted and odd-subscripted positions.
Let us now evaluate $u$ and $v$ in the free abelian group $G$ on $X$
(which we will write multiplicatively).
For $x\in X,$ an occurrence of $x$ as the $\!i\!$-indexed term
of the expression $u$ or $v$ will contribute $(-1)^i\,2$ to
the exponent of $x$ in the resulting element of $G,$
unless the term in question is the final
term (i.e., $x_n$ if our expression
is $x_0\lhd(\dots \lhd x_n)\dots\,)$
in which case it will contribute just $(-1)^{i}.$
Since no term occurs in both even and odd positions
in $u$ or in $v,$ we can conclude from
the structure of free abelian groups that $u$ and $v$ must have
the same length, the same number of
occurrences of each element of $X$ in nonfinal even position,
the same number of
occurrences of each element of $X$ in nonfinal odd position,
and the same final term.
Hence $u$ can be transformed into $v$ by
applications of~\eqref{d.id_ab}; hence the identity $u=v$ is indeed a
consequence of \eqref{d.idpt}, \eqref{d.inv}, \eqref{d.end2},
and~\eqref{d.id_ab}.
\end{proof}

What about the other direction?
I.e., for which groups $G$ will $\r{Core}(G)$ satisfy~\eqref{d.id_ab}?

\begin{theorem}\label{T.nilp}
If $G$ is a group, then $\r{Core}(G)$ satisfies~\eqref{d.id_ab} if
and only if $G$ is nilpotent of nilpotency class $\leq 2,$
i.e., if and only if
\begin{equation}\begin{minipage}[c]{35pc}\label{d.nilp}
every commutator $[x,y]\ =\ x^{-1} y^{-1} x\,y$ $(x,y\in G)$ is
central in $G.$
\end{minipage}\end{equation}
\end{theorem}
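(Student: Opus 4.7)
My plan is to translate the identity~\eqref{d.id_ab} into a group identity using the expansion~\eqref{d.x_0dots_gp}, then simplify it via a change of variables and handle both directions of the equivalence. Under that expansion, $w\lhd(x\lhd(y\lhd z))$ becomes the group element $w\,x^{-1}\,y\,z^{-1}\,y\,x^{-1}\,w$ and $y\lhd(x\lhd(w\lhd z))$ becomes $y\,x^{-1}\,w\,z^{-1}\,w\,x^{-1}\,y$, so~\eqref{d.id_ab} holds in $\mathrm{Core}(G)$ if and only if
\[
w\,x^{-1}\,y\,z^{-1}\,y\,x^{-1}\,w \;=\; y\,x^{-1}\,w\,z^{-1}\,w\,x^{-1}\,y \quad\text{for all } w,x,y,z\in G.
\]

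Next I would make the change of variables $a=wx^{-1}$, $b=yx^{-1}$, $t=xz^{-1}$, which for each fixed $x$ is a bijection from $(w,y,z)\in G^3$ to $(a,b,t)\in G^3$. Substituting $w=ax$, $y=bx$, $z=t^{-1}x$ and simplifying, successive $xx^{-1}$ pairs cancel, both sides acquire a single trailing $x$ which also cancels, and the variable $x$ disappears entirely. The identity reduces to $abtba=batab$ for all $a,b,t\in G$, i.e., $(ab)\,t\,(ba)=(ba)\,t\,(ab)$. So the task is to show that this three-variable identity is equivalent to condition~\eqref{d.nilp}.

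For the forward direction (assuming $G$ has class $\leq 2$), I would use the rearrangement $ab=ba\,[a,b]$ with $[a,b]$ central. Substituting on the leading $ab$ of each side and sliding the central element $[a,b]$ past $t$, both $abtba$ and $batab$ collapse to $batba\cdot[a,b]$, so they agree. Conversely, assuming $abtba=batab$, right-multiplication by $(ab)^{-1}$ yields $abt\cdot bab^{-1}a^{-1}=bat$, and a subsequent left-multiplication by $(ab)^{-1}$ yields
\[
t\,(bab^{-1}a^{-1})\,t^{-1}\;=\;(ab)^{-1}(ba)\;=\;[b,a].
\]
The right-hand side is independent of $t$, so $bab^{-1}a^{-1}$ is centralized by every $t\in G$ and hence is central; taking $t=1$ gives $bab^{-1}a^{-1}=[b,a]$, so $[b,a]$ itself is central for all $a,b$, which is~\eqref{d.nilp}. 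The main obstacle is spotting the change of variables that collapses the seven-letter identity to the clean form $abtba=batab$; once that is found, the commutator manipulations on both sides are routine.
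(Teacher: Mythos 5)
Your proof is correct and follows essentially the same route as the paper: both translate~\eqref{d.id_ab} via~\eqref{d.x_0dots_gp} into the group identity $w\,x^{-1}\,y\,z^{-1}\,y\,x^{-1}\,w = y\,x^{-1}\,w\,z^{-1}\,w\,x^{-1}\,y,$ extract centrality of all commutators by letting the $z$-variable range freely, and conversely use that centrality to interchange $w\,x^{-1}\,y$ with $y\,x^{-1}\,w$ (your $ab$ with $ba$). Your change of variables eliminating $x$ is a tidy repackaging of the paper's specialization $x=1,$ making explicit that the four-variable identity is equivalent to its three-variable instance $a\,b\,t\,b\,a = b\,a\,t\,a\,b.$
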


\begin{proof}
The quandle identity~\eqref{d.id_ab} on $\r{Core}(G)$
translates to the group-theoretic identity on $G,$
\begin{equation}\begin{minipage}[c]{35pc}\label{d.id_ab2}
$w\,x^{-1}\,y\,z^{-1}\,y\,x^{-1}\,w\ =
\ y\,x^{-1}\,w\,z^{-1}\,w\,x^{-1}\,y.$
\end{minipage}\end{equation}

Let us start with the case where $x=1,$
write $z^{-1}=u,$ and multiply the
resulting equation both on the left and on the right
by $y^{-1}\,w^{-1}.$
Then we get
\begin{equation}\begin{minipage}[c]{35pc}\label{d.id_ab3}
$u\,y\,w\,y^{-1} w^{-1}\ =\ y^{-1} w^{-1} y\,w\,u.$
\end{minipage}\end{equation}
Taking $u=1,$ this tells us that $y\,w\,y^{-1}\,w^{-1}$ equals
$y^{-1}\,w^{-1}\,y\,w,$ i.e., $[y,\,w].$
So the general case of~\eqref{d.id_ab3} tells us that
$u\,[y,\,w]=[y,\,w]\,u;$
so indeed, every commutator $[y,\,w]$ in $G$ is central.

Conversely, suppose that in $G$ every commutator is central.
Note that in any group, an element $w\,x^{-1}\,y$ (such as
we have at the beginning of the left side of~\eqref{d.id_ab2} and the
end of the right side) differs from $y\,x^{-1}\,w$
(as at the end of the left side of~\eqref{d.id_ab2} and the
beginning of the right side) by a product of commutators.
Since commutators are central, if we multiply
each side of~\eqref{d.id_ab2} by that product of
commutators, we can let that product act on the beginning of
the left side and the end of the right side,
reducing~\eqref{d.id_ab2} to the trivial identity
$(y\,x^{-1}\,w)\,z^{-1}\,(y\,x^{-1}\,w)
=(y\,x^{-1}\,w)\,z^{-1}\,(y\,x^{-1}\,w).$
So~\eqref{d.id_ab2} indeed holds in every group where
commutators are central.
\end{proof}

If a quandle of the form $\r{Core}(G)$ satisfies~\eqref{d.id_ab},
can it also be written $\r{Core}(H)$ for an abelian group $H$?
In general, no, as seen in the final statement of

\begin{lemma}\label{L.sqs}
For any group $G,$ the following conditions are equivalent:

\textup{(i)}\ \ In $G,$ every product of squares is a square.

\textup{(ii)}\,\ In $\r{Core}(G),$ for all elements
$x,\,y,\,z$ there exists an element $w$ such that
\begin{equation}\begin{minipage}[c]{35pc}\label{d.xyzw}
$x\lhd(y\lhd z)\ =\ w\lhd z.$
\end{minipage}\end{equation}

Thus, every abelian group, since it satisfies~\textup{(i)},
satisfies~\textup{(ii)}.
On the other hand, the group $G$ free on two
generators in the variety determined by~\eqref{d.nilp},
equivalently, the group of upper
triangular $3\times 3$ matrices over $\Z$ with $\!1\!$'s
on the diagonal, does not satisfy~\textup{(ii)}.
Hence the core quandle of the latter group, though it
satisfies~\eqref{d.id_ab}, is not isomorphic
to the core quandle of an abelian group.
\end{lemma}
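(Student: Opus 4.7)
The plan is to translate~(ii) into a group-theoretic condition and recognize it as~(i). Writing out $\lhd$, the equation $x\lhd(y\lhd z)=w\lhd z$ becomes $xy^{-1}zy^{-1}x=wz^{-1}w$; after right-multiplication by $z^{-1}$ this is equivalent to $xy^{-1}zy^{-1}xz^{-1}=(wz^{-1})^2$. Thus~(ii) says exactly that $xy^{-1}zy^{-1}xz^{-1}$ is a square for every $x,y,z\in G$. Setting $a=xy^{-1}$ (so that $y^{-1}x=y^{-1}ay$) and $t=zy^{-1}$, the element rewrites as $a\cdot(zy^{-1})\,a\,(zy^{-1})^{-1}=a\cdot tat^{-1}$; and as $a,t$ range over $G$ independently when $x,y,z$ do, (ii) is equivalent to the condition that $atat^{-1}$ is a square for all $a,t\in G$.

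The algebraic key is the identity $atat^{-1}=(at)(at)\,t^{-2}=(at)^2(t^{-1})^2$, which displays $atat^{-1}$ explicitly as a product of two squares. The further substitution $r=at$, $q=t^{-1}$---plainly a bijection on $G\times G$---turns~(ii) into the statement that $r^2q^2$ is a square for all $r,q\in G$. Since ``every product of two squares is a square'' implies by an obvious induction that every product of squares is a square, this is equivalent to~(i), completing the equivalence of~(i) and~(ii). For abelian~$G$, (i) holds trivially because $g_1^2\cdots g_n^2=(g_1\cdots g_n)^2$, so~(ii) holds in the core quandle of every abelian group.

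For the counterexample, let $G=H_3(\Z)$, realized as triples $(a,b,c)\in\Z^3$ with product $(a_1,b_1,c_1)(a_2,b_2,c_2)=(a_1+a_2,\,b_1+b_2,\,c_1+c_2+a_1b_2)$ and squaring rule $(a,b,c)^2=(2a,2b,2c+ab)$. Taking $X=(1,0,0)$ and $Y=(0,1,0)$, one computes $X^2Y^2=(2,2,4)$; for this to equal $(a,b,c)^2$ would force $a=b=1$ and $2c+1=4$, hence $c=3/2\notin\Z$. Thus~(i), and so~(ii), fails in $G$. Since $G$ is nilpotent of class~$2$, Theorem~\ref{T.nilp} gives that $\r{Core}(G)$ satisfies~\eqref{d.id_ab}; but~(ii) is a $\forall\exists$ sentence in the signature $\{\lhd\}$, hence a quandle-isomorphism invariant, and any abelian $H$ would make $\r{Core}(H)$ satisfy~(ii), so $\r{Core}(G)\not\cong\r{Core}(H)$ for any abelian $H$. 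The main obstacle is spotting the identity $atat^{-1}=(at)^2(t^{-1})^2$ and the parametrization that reduces the asymmetric condition~(ii) to the symmetric group-theoretic statement~(i); after that, the Heisenberg computation and the invariance argument are routine.
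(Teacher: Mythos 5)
Your proof is correct and follows essentially the same route as the paper: both reduce condition~(ii) to the statement that every product of two squares in $G$ is a square (the paper via the trajectory interpretation of ``multiplying by a square,'' you via the explicit substitutions $a=xy^{-1}$, $t=zy^{-1}$ and the identity $atat^{-1}=(at)^2(t^{-1})^2$), and both refute~(i) in the Heisenberg group by the same parity-of-the-commutator-exponent computation. Your write-up in fact fills in the details that the paper leaves as a parenthetical ``Idea,'' and your choice to falsify~(i) rather than~(ii) directly is an inessential variation.
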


\begin{proof}
In the paragraph following Lemma~\ref{L.nth-pwrs}, we noted
that condition~(i) above could be expressed in terms of the structure
of $\r{Core}(G);$ condition~(ii) is the explicit form that
that condition takes.
(Idea: multiplying a group element on one side by
an $\!n\!$-th power corresponds to moving it $n$ steps
along some trajectory; and by~\eqref{d.x_n2}, moving
an element $x_0$ two steps along a trajectory
is equivalent to applying some operation $y\lhd$ to it.
So~\eqref{d.xyzw} says that the result of multiplying $z\in G$
on a given side by a square, and then by another square,
can always be achieved by multiplying it by a single square.)

That every abelian group satisfies~(i) is clear.
To see that the free group $H$ of nilpotency class $2$ on
generators $x,\,y$ does not satisfy~(ii), let us
write the general element thereof as
$x^i\,y^j\,[y,x]^k$ $(i,\,j,\,k\in\Z),$
and note that the group operation is given by
\begin{equation}\begin{minipage}[c]{35pc}\label{d.nilpxy}
$(x^i\,y^j\,[y,x]^k)\ (x^{i'}\,y^{j'}\,[y,x]^{k'})\ =
\ x^{i+i'}\,y^{j+j'}\,[y,x]^{k+k'+ji'}.$
\end{minipage}\end{equation}
(Rough idea: $yx = xy\,[y,x],$ so in bringing the product on the
left-hand side to
normal form, each time we push one of the $j$ occurrences of $y$
in the first factor past one of the $i'$ occurrences of $x$
in the second, a $[y,x]$ is created.)

Note that if for this $H,$ we evaluate the left-hand side
of~\eqref{d.xyzw} with the element $y^{-1}$ in the role of $y,$ and
$1$ in the role of $z,$ we get, in group-theoretic terms,
$x\,y\,1\,y\,x,$ i.e., $x\,y^2\,x,$ which by~\eqref{d.nilpxy} equals
\begin{equation}\begin{minipage}[c]{35pc}\label{d.xy1yx}
$x^2\,y^2\,[y,x]^2.$
\end{minipage}\end{equation}

Now suppose that some choice of $w=x^i\,y^j\,[y,x]^k$ makes
the right-hand side of~\eqref{d.xyzw} equal~\eqref{d.xy1yx}.
Since we have taken $z=1,$ that right-hand side
is $w^2,$ and we see
from~\eqref{d.nilpxy} that for this to equal~\eqref{d.xy1yx},
we must have $i=j=1.$
But this gives the exponent of $[y,x]$ the value $2k+1,$
so the expression cannot agree with~\eqref{d.xy1yx}.
\end{proof}

On the other hand, if we adjoin to the nilpotent group of the above
lemma a central square root of $[y,x],$ the above problem goes away:

\begin{lemma}\label{L.cm_ncm}
Let $G$ be the free abelian group on three generators $x,\,y,\,z,$
and $H$ the group obtained by adjoining to the free
group of nilpotency class $2$ on
generators $x,\,y$ a central square root of the
element $[y,x],$ which we shall write $[y,x]^{1/2}.$
Thus, the general element of $H$ can be written in the
normal form $x^i\,y^j\,[y,x]^{k/2}$ with $i,\,j,\,k\in\Z,$
and the group operation of $H$ is given by~\eqref{d.nilpxy}
with $k$ and $k'$ everywhere replaced by $k/2$ and $k'/2.$

Then $\r{Core}(H)\cong \r{Core}(G),$ by the map
\begin{equation}\begin{minipage}[c]{35pc}\label{d.k/2}
$x^i\,y^j\,[y,x]^{k/2}\ \mapsto\ x^i\,y^j\,z^{k-ij}.$
\end{minipage}\end{equation}
\end{lemma}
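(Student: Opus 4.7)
My plan is to verify directly that $\varphi$ is a bijection of underlying sets and that it preserves the core operation. Bijectivity is immediate: both $H$ and $G$ are parametrized by triples $(i,j,k)\in\Z^3$ via their stated normal forms, and on these parameters $\varphi$ realizes the integer-invertible shearing $(i,j,k)\mapsto(i,j,k-ij)$, whose inverse is $(i,j,m)\mapsto(i,j,m+ij)$.

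For the substantive step, I would show $\varphi(a\lhd b) = \varphi(a)\lhd\varphi(b)$ for arbitrary $a=x^iy^j[y,x]^{k/2}$ and $b=x^{i'}y^{j'}[y,x]^{k'/2}$ in $H$. Using the product rule in the statement (the halved form of~\eqref{d.nilpxy}), one first computes $b^{-1}=x^{-i'}y^{-j'}[y,x]^{(-k'+2i'j')/2}$, and then multiplies $ab^{-1}a$ to obtain an expression of the shape
\[a\lhd b \ =\ x^{2i-i'}\,y^{2j-j'}\,[y,x]^{M/2},\]
where $M=(2k-k')+2(j-j')(i-i')$ is an explicit quadratic in the six exponents. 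Applying $\varphi$ then converts this commutator exponent into the $z$-exponent $M-(2i-i')(2j-j')$. On the $G$-side, since $G$ is abelian, $\varphi(a)\lhd\varphi(b)=\varphi(a)^2\varphi(b)^{-1}$, which evaluates immediately to $x^{2i-i'}y^{2j-j'}z^{2(k-ij)-(k'-i'j')}$, so it remains to check the polynomial identity $M-(2i-i')(2j-j')=2(k-ij)-(k'-i'j')$.

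The main obstacle is bookkeeping of this final $z$-exponent identity: expanding $2(j-j')(i-i')$ and $(2i-i')(2j-j')$, the mixed terms $\pm 2ji'$ and $\pm 2j'i$ must cancel in pairs, leaving exactly $-2ij+i'j'$, which combines with the $2k-k'$ contributed by $M$ to give the claimed result. Conceptually, the shift $k\mapsto k-ij$ defining $\varphi$ is dictated by the observation that the nonabelian defect of $H$ (one factor of $[y,x]$ per pass of $y$ across $x$) is precisely what the shear absorbs into the $z$-coordinate of $G$; once one accepts this picture the cancellations are forced, and the computation must succeed. Since preservation of $\lhd$ alone is enough to witness an isomorphism of involutory quandles, this completes the proof.
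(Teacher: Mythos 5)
Your proposal is correct and follows the same route as the paper, which simply asserts that the map is clearly a bijection and that "computation shows that it respects $\lhd$" (adding that the computation of the $z$-exponent is messy). You have in fact carried out that omitted computation correctly: the exponent $M=(2k-k')+2(j-j')(i-i')$ and the final identity $M-(2i-i')(2j-j')=2(k-ij)-(k'-i'j')$ both check out.
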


\begin{proof}
The map~\eqref{d.k/2} is clearly a bijection.
Computation shows that it respects $\lhd.$
\end{proof}

(The computation of the exponent of $z$ in the image of
the $\!\lhd\!$-product of two elements of $H$ is messy; I wish
I could offer a nicer verification.)
% For a bit of intuition on~\eqref{d.k/2}, observe that
% $x^i\,y^j\,[y,x]^{k/2}$ can also be written
% $y^j\,x^i\,[y,x]^{(k/2)-ij},$
% and that $(k/2)\,+\,((k/2)-ij)=k-ij$ balances the
% asymmetry implicit in each of these two normal forms for $H.$

Returning to the consequence of Lemma~\ref{L.sqs}, that the core
quandles of the free abelian group of rank three and the
free group of nilpotency class~$2$ on two generators
are not isomorphic, we remark that each can nonetheless
be embedded in the other.
In one direction, restricting~\eqref{d.k/2} to the
case where $[y,x]$ has integer exponent, we get a $\!\lhd\!$-embedding
of the free nilpotent group in the free abelian group,
\begin{equation}\begin{minipage}[c]{35pc}\label{d.np->ab}
$x^i\,y^j\,[y,x]^k\ \mapsto\ x^i\,y^j\,z^{2k-ij}.$
\end{minipage}\end{equation}

For the other direction, note that the inverse
of~\eqref{d.k/2} carries the subgroup of $G$ generated
by $x,$ $y^2$ and $z^2,$ which is free abelian on these generators,
into the subgroup of $H$ consisting of elements in which $[y,x]$
has integer exponent, i.e., our free group of nilpotency class~$2.$

Turning back to the identity~\eqref{d.id_ab},
here is another way to look at that condition.

\begin{lemma}\label{L.ab_in_Perm}
Let $Q$ be a nonempty involutory quandle, and let us fix an
arbitrary element $u\in Q.$
Then $Q$ satisfies~\eqref{d.id_ab} if and only if
\textup{(}in the notation of Proposition~\ref{P.rep}\textup{)}
the elements of the set
$\{\,\overline{x}\ \overline{u}\mid x\in Q\}\subseteq \r{Perm}(Q)$ all
commute with one another; in other words, if and only if the map
\begin{equation}\begin{minipage}[c]{35pc}\label{d.*af*au}
$x\ \mapsto\ \overline{x}\ \overline{u},$
\end{minipage}\end{equation}
which is a $\!\lhd\!$-homomorphism $Q\to\r{Core}(\r{Perm}(Q))$
\textup{(}since it is a group-theoretic right translate
of the \mbox{$\!\lhd\!$-homomorphism} $x\mapsto\overline{x}$ of
Proposition~\ref{P.rep}\textup{)},
has image in an abelian subgroup of $\r{Perm}(Q).$

Hence if that condition holds, and if, moreover, the
map $x\mapsto\overline{x}$ is one-to-one, then $Q$
is embeddable in $\r{Core}(G)$ for an abelian group $G.$

In particular, for every group $H$ of nilpotency class $\leq 2$
whose center has no elements of order $2,$
$\r{Core}(H)$ is embeddable in $\r{Core}(G)$ for an abelian group $G.$
\end{lemma}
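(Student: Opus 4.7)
The plan is to apply the two previously established parts of the lemma to $Q = \r{Core}(H)$, after verifying their hypotheses in turn.

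First, by Theorem~\ref{T.nilp}, the fact that $H$ has nilpotency class $\leq 2$ means every commutator in $H$ is central, which is equivalent to $\r{Core}(H)$ satisfying the identity~\eqref{d.id_ab}. The first assertion of the present lemma then applies: fixing any $u \in H$, the elements $\overline{x}\,\overline{u}$ (for $x \in H$) lie in an abelian subgroup $A$ of $\r{Perm}(\r{Core}(H))$, and the map~\eqref{d.*af*au} is a quandle homomorphism from $\r{Core}(H)$ into $\r{Core}(A)$.

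To invoke the second assertion, it remains to check that $x \mapsto \overline{x}$ is one-to-one on $H$; equivalently, that the map~\eqref{d.*af*au} is one-to-one, since the two differ by right multiplication by the fixed permutation $\overline{u}$. By the last statement of Proposition~\ref{P.rep}, $\overline{x} = \overline{x'}$ if and only if $x$ and $x'$ lie in the same coset of the subgroup of elements of exponent~$2$ in the center of $H$. The hypothesis that the center of $H$ has no elements of order~$2$ says precisely that this exponent-$2$ subgroup is trivial, so $x \mapsto \overline{x}$ is injective. Combining these facts with the second assertion of the lemma yields the desired embedding of $\r{Core}(H)$ into $\r{Core}(G)$ for some abelian group $G$.

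There is no serious obstacle in the argument; the whole proof is bookkeeping that assembles Theorem~\ref{T.nilp}, Proposition~\ref{P.rep}, and the first two assertions of this lemma. The only delicate point is the standard convention that ``element of order~$2$'' excludes the identity, so that the hypothesis on the center of $H$ does indeed force the relevant exponent-$2$ subgroup of the center to be trivial (and not merely to consist of central involutions together with~$1$).
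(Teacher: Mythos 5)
Your argument only addresses the final ``In particular'' assertion of the lemma, and for that piece it is correct and matches the paper's intent: the paper likewise obtains it by combining Theorem~\ref{T.nilp} (nilpotency class $\leq 2$ gives~\eqref{d.id_ab}) with the last assertion of Proposition~\ref{P.rep} (triviality of the exponent-$2$ part of the center gives injectivity of $x\mapsto\overline{x}$). Your remark about ``order $2$'' versus ``exponent $2$'' is also the right small point to make.

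The genuine gap is that you treat the first two assertions of the lemma as ``previously established parts,'' when they are in fact part of the statement you are being asked to prove, and they carry the lemma's main mathematical content. Nothing earlier in the paper proves the equivalence of~\eqref{d.id_ab} with the commutativity of the set $\{\overline{x}\,\overline{u}\mid x\in Q\}$. The paper's proof of that equivalence is a short but nontrivial computation: since each $\overline{u}$ is an involution, commutativity of the $\overline{x}\,\overline{u}$ is equivalent (independently of the choice of $u$) to commutativity of all $\overline{x}\,\overline{y}^{-1}$; from $(\overline{w}\,\overline{x})(\overline{y}\,\overline{x})=(\overline{y}\,\overline{x})(\overline{w}\,\overline{x})$ one cancels to get $\overline{w}\,\overline{x}\,\overline{y}=\overline{y}\,\overline{x}\,\overline{w}$, and applying both sides to an element $z\in Q$ yields exactly~\eqref{d.id_ab}; the converse runs the same computation backwards. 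Likewise the second assertion, while quick, needs the observation that an injective quandle homomorphism whose image lies in $\r{Core}(A)$ for an abelian subgroup $A\subseteq\r{Perm}(Q)$ is precisely an embedding into the core quandle of an abelian group. Without these two pieces your proof of the final assertion rests on unproved claims, so the proposal as written is incomplete.
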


\begin{proof}
Suppose first that for some $u\in Q,$ the elements
$\overline{x}\,\overline{u}$ $(x\in Q)$ all lie in an abelian
subgroup of $\r{Perm}(Q).$
Since $\overline{u}^2=1,$ these elements can be written
$\overline{x}\,\overline{u}^{-1},$ hence for any $x,\,y\in Q,$
that abelian subgroup contains $(\overline{x}\,\overline{u}^{-1})\,
(\overline{y}\,\overline{u}^{-1})^{-1}=\overline{x}\,\overline{y}^{-1};$
so our hypothesis is equivalent to the statement
(independent of the choice of an element $u)$ that
all elements of $\r{Perm}(Q)$ of the form
$\overline{x}\,\overline{y}^{-1}$ $(x,y\in Q)$ commute.
Again using the fact that the exponent $^{-1}$
on an element of the form $\overline{x}$ makes no difference,
we see in particular that for all $w,\,x,\,y\in Q,$ we have
$(\overline{w}\,\overline{x})\,(\overline{y}\,\overline{x})=
(\overline{y}\,\overline{x})\,(\overline{w}\,\overline{x}),$ which,
cancelling the $\overline{x}$'s on the right, gives
$\overline{w}\,\overline{x}\,\overline{y}=
\overline{y}\,\overline{x}\,\overline{w}.$
Applying this element of $\r{Perm}(Q)$
to elements $z\in Q,$ we get~\eqref{d.id_ab}.

The reverse implication works essentially the same way.

The assertion of the second paragraph follows immediately.
The final assertion then follows in view of the last
assertion of Proposition~\ref{P.rep}.
\end{proof}

(We remark that~\cite[Definition~1.3]{qndl} defines an
``abelian quandle'' to be a not necessarily involutory
quandle satisfying an identity equivalent to~\eqref{d.id_ab},
and that in~\cite[Theorem~10.5]{qndl}, a description is
given of the free abelian involutory quandle on finitely
many generators.)
% In alternative description in last sentence of \cite[\S10]{qndl},
% ``exactly one $k_i$ is odd'' should be supplemented with
% ``$k+0+\dots+k_n=1$''.

Let us take a brief look at the other very simple
sort of identity a group can satisfy, saying that its
elements all have exponent~$n,$ for some fixed~$n.$
The technique of
Lemma~\ref{L.nth-pwrs} shows us that for each~$n,$
the groups satisfying this identity can be characterized
by a $\!\lhd\!$-identity on their core quandles.
Namely, comparing~\eqref{d.x_n} and~\eqref{d.x_n2}, we see

\begin{lemma}\label{L.exp_n}
Let $n$ be a positive integer.
Then a group $G$ satisfies the identity $x^n=1$ if and
only if $\r{Core}(G)$ satisfies the identity equating
the formulas for~$x_0$ and $x_n$ in~\eqref{d.x_n2};
i.e., for $n$ even, the identity $x=y\lhd(x\lhd(\dots\lhd x))$
with $n/2$ $y$\!'s and $n/2$ $x$\!'s in the right-hand expression;
for $n$ odd, $x=y\lhd(x\lhd (\dots\lhd y))$
with $(n+1)/2$ $y$\!'s and $(n-1)/2$ $x$\!'s.\qed
\end{lemma}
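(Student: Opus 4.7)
The plan is to relate the proposed quandle identity to the group-theoretic statement ``every element has order dividing~$n$'' via the two parallel descriptions of trajectories. In a group, formula~\eqref{d.x_n} gives $x_n = x(x^{-1}y)^n$ for the $\!n\!$-th term of the trajectory with $x_0 = x$, $x_1 = y$. In an involutory quandle, the listing~\eqref{d.x_n2} expresses the same $x_n$ as a $\!\lhd\!$-word in $x$ and $y$ with parentheses clustered to the right: the arguments alternate, ending in $x$ when $n$ is even (with $n/2$ $y$'s and $n/2$ $x$'s) and ending in $y$ when $n$ is odd (with $(n{+}1)/2$ $y$'s and $(n{-}1)/2$ $x$'s). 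For $Q = \r{Core}(G)$, these two descriptions evaluate to the same element of $G$, via the translation~\eqref{d.x_0dots_gp}.

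For the forward direction, suppose every $g \in G$ satisfies $g^n = 1$. Given any $x, y \in G$, set $z = x^{-1}y$; then $z^n = 1$, so $x z^n = x$, i.e., the group trajectory through $x_0 = x$, $x_1 = y$ satisfies $x_n = x_0$. Rewriting $x_n$ using its quandle expression from~\eqref{d.x_n2} yields exactly the identity asserted in the lemma, holding for arbitrary $x, y \in G$.

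For the converse, assume the displayed quandle identity holds in $\r{Core}(G)$. Then for all $x, y \in G$, interpreting the right-hand side in $G$ via~\eqref{d.x_0dots_gp} gives $x(x^{-1}y)^n = x$, hence $(x^{-1}y)^n = 1$. Since $x^{-1}y$ ranges over all of $G$ (e.g., take $x = 1$ and let $y$ be arbitrary), every element of $G$ has order dividing $n$. The only bookkeeping needed is the verification that the right-clustered word in~\eqref{d.x_n2} does translate under~\eqref{d.x_0dots_gp} to the group element $x(x^{-1}y)^n$; this is precisely the observation already made when~\eqref{d.x_n2} was introduced (and is the content of the $\!n\!$-th-power case of Lemma~\ref{L.nth-pwrs}), so no fresh computation is required, and there is no substantial obstacle.
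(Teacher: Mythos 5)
Your argument is correct and is essentially the paper's own: the lemma is stated there with a \qed because it follows immediately from comparing~\eqref{d.x_n} and~\eqref{d.x_n2}, exactly the comparison you carry out, with the forward direction via $(x^{-1}y)^n=1\Rightarrow x_n=x_0$ and the converse via specializing $x=1$. No discrepancy to report.
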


The above ``if and only if'' shows that
in this case, we don't have the complication that we had
for commutativity, where the effect of
our $\!\lhd\!$-identity was weaker than
the group identity we started with.
But we have the opposite sort of complication.
For each positive integer $n$ we can ask

\begin{question}\label{Q.exp_n}
Does the $\!\lhd\!$-identity described in Lemma~\ref{L.exp_n}
imply, for general involutory quandles, {\em all}
identities satisfied by the core-quandles of groups of exponent $n$?

Equivalently, is the free involutory quandle $Q$ on any
set of generators, subject to that identity, embeddable in the
involutory quandle of a group of exponent $n$?
\end{question}

Observe that in the quandle $Q$ of the final sentence of
the above question, all trajectories have period dividing $n.$
Hence if $n$ is not divisible by $4,$
so that $Q$ satisfies condition~(i) of Proposition~\ref{P.fix_fix},
an affirmative answer to Question~\ref{Q.fix_fix} would imply
that $Q$ is embeddable in the core quandle of some group.
So assuming the answer to Question~\ref{Q.fix_fix} is affirmative,
suppose $Q\subseteq\r{Core}(G).$
Then the $\!\lhd\!$-identity in question implies that for all
$x,y\in Q,$ the element $xy^{-1}\in G$ has exponent $n.$
By a translation, we may assume that $Q$ contains $1\in G.$
The fact that elements $xy^{-1}$ $(x,y\in Q)$ have exponent
$n$ in $G$ then implies that elements of $Q$ themselves have exponent
$n$ in $G,$ as do pairwise products $xy\in G$ of elements of $Q$ (since
$y^{-1}\in Q,$ as it
belongs to the trajectory in $G$ determined by $x_0=1,$ $x_1=y).$

Can we conclude that under the above assumption
regarding Question~\ref{Q.fix_fix}, and for $Q$ translated as above
to contain $1,$ {\em all} elements of the subgroup of $G$ generated
by $Q$ have exponent $n,$ which would give an affirmative answer
to Question~\ref{Q.exp_n} for such $n$?
Not so far as I can see.
The products $xy$ mentioned above need not lie in $Q$
itself, so there is no evident reason why larger products,
e.g., $xyz$ for $x,y,z\in Q,$ should have exponent $n.$
(If we start with two elements $x,y\in Q,$ then an element of the
form $x^i y^j x^k$ {\em will} have exponent $n,$ since it
is conjugate in $G$ to $x^{i+k} y^j,$ and $x^{i+k}$ and
$y^j$ lie in $Q,$ being members of the trajectories
beginning $1,x$ and $1,y.$
But I see no reason why longer expressions in $x$ and $y,$
e.g., $[x,y]=x^{-1} y^{-1} x\,y,$ should have exponent $n.)$
The subgroup of $G$ generated by $Q$ will, of course, have a
universal exponent-$\!n\!$ homomorphic image; but some elements of $Q$
might fall together in that image.

(Groups subject to identities $x^n=1$ have also been used in
knot theory, \cite{Burnside1}, \cite{Burnside2}.)

I have not examined the consequences for $\r{Core}(G)$ of
any other identities on a group $G.$

\section{Counting generators of core quandles}\label{S.gen_rel}

Given a finitely generated group $G,$ what can be said about
the number of elements needed to generate the quandle $\r{Core}(G)$?
Here is a lower bound, which when $G$ is abelian
gives the precise value.

\begin{theorem}\label{T.gen_geq}
For $G$ a group, let $\r{gen}(G)$ denote
the minimum number of elements needed to generate $G$ as a group,
and for $Q$ an involutory quandle, let $\r{gen}(Q)$ denote
the minimum number of elements needed to generate $Q$ as a
quandle.

Then if $\r{gen}(G)$ is finite, and we write
$N$ for the subgroup of $G$ generated by the squares \textup{(}clearly
normal; so $G/N$ is the
universal exponent-$\!2\!$ homomorphic image of $G),$ then we have
\begin{equation}\begin{minipage}[c]{35pc}\label{d.fg}
$\r{gen}(\r{Core}(G))\ \geq
\ \r{max}(\r{gen}(G)\!+\!1,\,[G:N]).$
\end{minipage}\end{equation}

If $G$ is abelian, we have equality in~\eqref{d.fg}.
\end{theorem}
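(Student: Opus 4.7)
The plan is first to establish the lower bound $\r{gen}(\r{Core}(G))\geq\max(\r{gen}(G)+1,\,[G:N])$ for any group $G$ with $\r{gen}(G)$ finite, and then, assuming $G$ abelian, to exhibit an explicit generating set of $\r{Core}(G)$ of that size. For $\r{gen}(\r{Core}(G))\geq\r{gen}(G)+1$: given any generating set $g_1,\dots,g_n$ of $\r{Core}(G)$, use the right-translation automorphism of Lemma~\ref{L.auts}(i) to arrange $g_1=1$; by the normal form~\eqref{d.x_0dots_gp}, every element of the subquandle lies in the subgroup $\langle g_2,\dots,g_n\rangle$ of $G$, which must therefore equal $G$. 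For $\r{gen}(\r{Core}(G))\geq[G:N]$: the induced map $\r{Core}(G)\to\r{Core}(G/N)$ is surjective, the target has trivial $\lhd$-operation by Lemma~\ref{L.exp2}, and so every subquandle of the target equals its underlying set; hence the images $\bar g_i$ must already exhaust $G/N$.

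For the upper bound in the abelian case, write $G$ additively, so that $x\lhd y = 2x-y$. The key structural observation is that the subquandle of $\r{Core}(G)$ generated by $\{g_1,\dots,g_k\}$ coincides with $\bigcup_j(g_j+2H)$, where $H=\langle g_i-g_l : i,l\rangle\leq G$: the inclusion $\supseteq$ follows from the identity $g_i\lhd(g_l\lhd g_j)=g_j+2(g_i-g_l)$ and iteration, while $\subseteq$ is a routine induction on expression length using $2u-v-g_j=2[(g_i-g_j)+2h_u-h_v]\in 2H$ when $u\in g_i+2H,\ v\in g_j+2H$. Normalising to $g_1=0$ makes $H=\langle g_2,\dots,g_k\rangle$, so the subquandle equals $G$ precisely when \textup{(a)}~$g_2,\dots,g_k$ generate $G$ as a group (whence $2H=N$) and \textup{(b)}~the classes $\bar g_j$ exhaust $G/N$. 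It thus suffices to exhibit $k=\max(m+1,2^d)$ elements satisfying \textup{(a)} and \textup{(b)}, where $m=\r{gen}(G)$ and $d=\log_2[G:N]$; note $m\geq d$ since any generating set of $G$ has image spanning $G/N\cong\mathbb{F}_2^d$.

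Fix a minimal generating set $x_1,\dots,x_m$ of $G$, reordered so $\bar x_1,\dots,\bar x_d$ form an $\mathbb{F}_2$-basis of $G/N$. In the case $k=m+1\geq 2^d$, take $\{0,x_1,\dots,x_m\}$ and, for each of the $2^d-d-1$ cosets of $N$ not already covered by $\{\bar 0,\bar x_1,\dots,\bar x_d\}$, modify one of the ``extra'' $x_{d+j}$ by adding a combination of $x_1,\dots,x_d$ to shift its class to the missing coset; such modifications preserve $\langle x_1,\dots,x_m\rangle=G$, and the inequality $m-d\geq 2^d-d-1$ supplies enough modifiable entries. In the case $k=2^d>m+1$, set $y_\alpha=\sum_{i=1}^d\alpha_i x_i$ for $\alpha\in\mathbb{F}_2^d$, obtaining $2^d$ coset representatives generating only $\langle x_1,\dots,x_d\rangle$. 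For each $j=1,\dots,m-d$ write $\bar x_{d+j}=\sum_i\alpha^{(j)}_i\bar x_i$, note that $u_j:=x_{d+j}-\sum_i\alpha^{(j)}_i x_i\in N$, and replace $y_\beta$ by $y_\beta+u_j$ for $m-d$ distinct $\beta\in\mathbb{F}_2^d\setminus\{0,e_1,\dots,e_d\}$; the bound $m-d\leq 2^d-d-2$, forced by the case hypothesis, guarantees enough such $\beta$. The modified set still contains $y_{e_i}=x_i$, and the identity $x_{d+j}=u_j+\sum_i\alpha^{(j)}_i x_i$ shows it generates all of $G$, while the $u_j\in N$ do not disturb cosets.

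The main obstacle is the second case of the upper bound: a naive lift of a basis of $G/N$ generates only a subgroup complementing $N$ in $G$, so one must absorb the missing generators of $G$ into the coset lifts. The observation that the corrections $u_j$ lie in $N$ is precisely what permits this absorption, and the case hypothesis $m+1<2^d$ furnishes just enough spare nonzero, non-basis cosets to accept them.
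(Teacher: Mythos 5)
Your proof is correct. The lower bound argument is the same as the paper's: translate a generating set to contain the identity, observe that the subquandle it generates lies in the subgroup generated by the remaining elements (giving $\r{gen}(G)+1$), and pass to $\r{Core}(G/N)$, where every subset is a subquandle (giving $[G:N]$). For the abelian upper bound, your structural lemma --- that the subquandle generated by $\{g_1,\dots,g_k\}$ with $g_1=0$ is $\bigcup_j(g_j+2H)$ for $H=\langle g_2,\dots,g_k\rangle$ --- is equivalent in content to the paper's criterion~\eqref{d.gen_ab}, but you then build the optimal generating set quite differently. The paper invokes the structure theorem to decompose $G$ into cyclic factors of even and odd order, takes all $2^m$ products of the ``even'' generators together with the ``odd'' ones, and then merges redundant generators pairwise to reach cardinality $\max(2^m,n+1)$; you instead reduce everything to the two conditions ``the $g_j$ generate $G$'' and ``the $\bar g_j$ exhaust $G/N\cong\mathbb{F}_2^d$,'' and handle the two regimes $m+1\geq 2^d$ and $m+1<2^d$ by perturbing, respectively, the surplus group generators by combinations of $x_1,\dots,x_d$ (to hit missing cosets) or the surplus coset representatives $y_\beta$ by elements $u_j\in N$ (to absorb the missing group generators); the counting in each case ($m-d\geq 2^d-d-1$, resp.\ $m-d\leq 2^d-d-2$) checks out. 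Your route avoids the classification of finitely generated abelian groups, using only that $G/N$ is an $\mathbb{F}_2$-vector space of dimension $d\leq m$, and makes the interplay between the two terms of the maximum in~\eqref{d.fg} more transparent; the paper's merging trick is more concrete but tied to the cyclic decomposition. Both are valid.
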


\begin{proof}
In view of Lemma~\ref{L.exp2}, the homomorphic image
$\r{Core}(G/N)$ of $\r{Core}(G)$ cannot be generated by
any proper subset of $G/N,$ hence requires
$[G:N]$ generators; hence $\r{Core}(G)$ itself
requires at least that many; so to get~\eqref{d.fg} it remains
to show that $\r{Core}(G)$ also requires at least
$\r{gen}(G)+1$ generators.

Suppose $\r{Core}(G)$ is generated by a set $S.$
Since $\r{Core}(G)$ is nonempty, $S$ must be nonempty;
choose $x\in S.$
Since translations under the group structure are
automorphisms of $\r{Core}(G),$
$\r{Core}(G)$ is also generated by $x^{-1}S;$
hence (since the $\!\lhd\!$-operation of
$\r{Core}(G)$ is a derived operation of $G),$
the group $G$ is generated by $x^{-1}S.$
But $1\in x^{-1}S;$ so $x^{-1}S-\{1\}$ also generates $G,$ so
$\r{card}(S)\geq\r{gen}(G)+1,$ as required.

To get the reverse inequality for abelian groups, let us first note
that if $G$ is an abelian group and $X$ any subset of $G$
containing $1,$ then an element $z\in G$ will belong to
the subquandle of $\r{Core}(G)$ generated by $X$ if and only if
\begin{equation}\begin{minipage}[c]{35pc}\label{d.gen_ab}
$z$ can be written as a product of powers of elements
of $X-\{1\},$ in which the exponents of all but at most one
of those elements are even.
\end{minipage}\end{equation}
Indeed, if we take an expression~\eqref{d.x_0dots_gp}
with all $x_i$ in $X,$
drop factors with $x_i=1,$ and combine the occurrences of
each element of $X,$
we get a product as described in~\eqref{d.gen_ab}, where the only member
of $X-\{1\}$ that can appear with odd exponent is the
$x_n$ of~\eqref{d.x_0dots_gp} if this is not $1.$
(If all the $x_i$ in~\eqref{d.x_0dots_gp} are~$1,$
we regard the resulting expression as an empty
product~\eqref{d.gen_ab}, which we understand to have value $1.)$

Given a finitely generated abelian group $G,$ we now
wish to construct a generating set for $\r{Core}(G)$
of the cardinality shown on the right-hand side of~\eqref{d.fg}.
To do this we will start with a set $X$ of possibly larger
cardinality, which it is easy
to verify generates $\r{Core}(G),$ then show how to ``combine''
certain pairs of elements of $X$ to get a set
$X'\subseteq\r{Core}(G)$ of the asserted cardinality,
whose closure under $\lhd$ contains $X,$ whence
$X'$ also generates $\r{Core}(G).$

To construct $X,$ let $\r{gen}(G)=n$
(no connection with the $n$ of~\eqref{d.x_0dots_gp}).
Being a finitely generated abelian group,
$G$ is a direct product of $n$ cyclic subgroups
$\langl g_i\mid g_i^{d_i}=1\rangl$ $(i=1,\dots,n)$
where each $g_i\in G,$ and each $d_i$ is either $0$ or $>1.$
Without loss of generality, assume $d_1,\dots,d_m$
even and $d_{m+1},\dots,d_n$ odd, where $0\leq m\leq n.$
Thus, the universal exponent-$\!2\!$ homomorphic image $G/N$
has order $2^m.$
Let
\begin{equation}\begin{minipage}[c]{35pc}\label{d.X=}
$X\ =\ X_0\ \cup\ X_1\ \cup\ X_2\ \cup\ X_3,$
\end{minipage}\end{equation}
where
\begin{equation}\begin{minipage}[c]{35pc}\label{d.X_0-3}
$X_0=\{1\},\ \ %
X_1=\{g_1,\dots,g_m\},\ \ %
X_2=\{\mbox{products of $\geq 2$ of $g_1,\dots,g_m\!$}\},\ \ %
X_3=\{g_{m+1},\dots,g_n\}.$ %
\end{minipage}\end{equation}

Note that $X_0\cup X_1\cup X_2$ is the set of all
products of subsets of $\{g_1,\dots,g_m\},$ and so has
cardinality $2^m,$ while $X_0\cup X_1\cup X_3$ has cardinality $n+1.$

To express an arbitrary $z\in G$ as in~\eqref{d.gen_ab} using
this $X,$ start with
the product of those $g_i$ with $1\leq i\leq m$ that occur with
odd exponent in $z,$ a member of $X_0\cup X_1\cup X_2;$ multiply
this element by appropriate {\em even} powers of $g_1,\dots,g_m\in X_1$
so as to achieve precisely the desired powers of each of those elements,
and, finally, note that each $g_i\in X_3$ has odd order,
hence the subgroup it generates is also generated by its square,
so that every power of $g_i$ can be regarded as an even power of
$g_i;$ so those $g_i$ can also be brought into our
product~\eqref{d.gen_ab} with even exponents so as to achieve
the desired value~$z.$
Hence $X$ indeed generates $\r{Core}(G).$

The trick for getting a generating set of smaller cardinality is
to combine elements of the sets $X_2$ and $X_3.$
Given $g_{i_1}\dots g_{i_k}\in X_2$
and $g_j\in X_3,$
I claim that using the operation $\lhd,$ we can obtain
these two elements of $X$ from their product
\begin{equation}\begin{minipage}[c]{35pc}\label{d.y=}
$y\ =\ g_{i_1}\dots g_{i_k}\,g_j$
\end{minipage}\end{equation}
and the members of $X_0\cup X_1.$
Indeed, first note that $y^2 g_{i_1}^{-2}\dots g_{i_k}^{-2}=g_j^2$
will lie in the subquandle generated by these
elements (cf.~\eqref{d.gen_ab},
noting that $1,\,g_{i_1},\dots,g_{i_k}\in X_0\cup X_1).$
Since $g_j$ has odd order, some power of $g_j^2$
gives us $g_j,$ as desired.
Multiplying $y$ by an even power of this element $g_j$
which equals $g_j^{-1},$ we get $g_{i_1}\dots g_{i_k},$ the
other element we wanted to recover.

By combining in this way
pairs of elements, one from $X_2$ and one from $X_3,$
until all elements of one of these
sets have been used, we can replace $X_2\cup X_3$ in our generating
set for $\r{Core}(G)$ by a set $X_{2,3},$ whose cardinality
is the greater of the cardinalities of $X_2$ and $X_3.$
(Namely, $X_{2,3}$ will consist of the products
$g_{i_1}\dots g_{i_k}\,g_j$
that we have introduced, together with the unused elements, if any,
of one of $X_2$ and $X_3.)$
The cardinality of the resulting generating set,
$X_0\cup X_1\cup X_{2,3},$
will be the greater of the cardinalities of
$X_0\cup X_1\cup X_2$ and $X_0\cup X_1\cup X_3,$ which are
$2^m=[G:N]$ and $n+1=\r{gen}(G)+1$
respectively, giving equality in~\eqref{d.fg}, as desired.
\end{proof}

What about an upper bound for $\r{gen}(\r{Core}(G))$ for
a not necessarily abelian group $G$?
Can we even expect the core quandle of, say, a free group on more than
one generator to be finitely generated?
At first sight it seems implausible that
the {\em symmetric} expressions~\eqref{d.x_0dots_gp}
in the elements of some finite set $X$ should be able to
represent {\em arbitrary} elements of $G,$ which need not
have any sort of symmetry -- unless, perhaps, we can somehow
arrange that most of the terms on, say, the right sides
of our expressions~\eqref{d.x_0dots_gp} cancel one another,
while the left sides carry the structure of our elements.

Surprisingly, we can do this.
The key idea is that the distinction between free abelian
groups and free groups concerns commutators,
and that if for every pair of generators $g_i,$ $g_j$ $(i<j)$ of
our free group, we include in the set with which we plan
to generate $\r{Core}(G)$
the elements $g_i,$ $g_j$ and $g_i\,g_j,$ then
$g_i^{-1},$ $g_j^{-1}$ and $g_i\,g_j,$ multiplied
in one order, give the commutator
$[g_i,g_j],$ while multiplied in the reverse order, they give $1:$
\begin{equation}\begin{minipage}[c]{35pc}\label{d.[]vs1}
$g_i^{-1}\,g_j^{-1}\,(g_i\,g_j)\ =\ [g_i,g_j],$\quad
$(g_i\,g_j)\ g_j^{-1}\,g_i^{-1}\ =\ 1.$
\end{minipage}\end{equation}

I will describe below how to use this fact to get a generating set
of cardinality $2^n$ for the core of a free group on $n$ generators,
then show in Theorem~\ref{T.gen_leq} how to improve that bound somewhat
for more general finitely generated groups.

Let $G$ be the free group on generators $g_1,\dots,g_n,$ and
let $X$ be the set of all $2^n$ products $g_{i_1}\dots g_{i_r}$
with $0\leq r\leq n$ and $i_1<\ldots<i_r.$
(In particular, $X$ contains the empty product, $1.)$

Given $z\in G,$ we wish to find an expression~\eqref{d.x_0dots_gp}
with all $x_i$ in $X,$ which has in $G$ the value~$z.$

Since $1\in X,$ we can, by using $1$ for various $x_i,$
represent in the form~\eqref{d.x_0dots_gp} any symmetric
string of elements of $X$ with exponents $\pm 1$
(with no restriction that these exponents alternate
between $+1$ and~$-1).$

Let $G'$ be the commutator subgroup of $G,$ so that
$G/G'$ is free abelian on the images of $g_1,\dots,g_n.$
As in the proof of Theorem~\ref{T.gen_geq}, we can find a word $w_0$
of the form~\eqref{d.x_0dots_gp}
in the elements of $X$ which, evaluated in $G/G',$ agrees with $z.$
Thus if, instead, we evaluate $w_0$ in $G,$ it gives
an element $z_0$ which is congruent to $z$ modulo $G'.$
Say $z = u\,z_0$ with $u\in G'.$

The group $G'$ is generated by conjugates in $G$ of
elements $[g_i,\,g_j]$ with $1\leq i<j\leq n.$
Each such conjugate will be the value of an expression
$v\,g_i^{-1} g_j^{-1} (g_i\,g_j)\,v^{-1},$
where $v$ is an expression in the elements of $X,$
and by $v^{-1}$ we mean the expression gotten by reversing the order
of factors and interchanging exponents $+1$ and $-1.$
Let $w_1$ denote a word gotten by multiplying together a
family of such expressions for conjugates of commutators,
and inverses of such expressions,
which, when evaluated in $G,$ gives~$u.$

Now let $\overline{w_1}$ denote the word gotten by reversing
the order of the terms from $X$ appearing in $w_1$
(without changing the exponents $+1$ and $-1),$
and take $w= w_1\,w_0\,\overline{w_1}\,.$
Since $w_0$ was symmetric, $w$ will be symmetric, hence when
evaluated in $G,$ it gives
a member of the subquandle of $\r{Core}(G)$ generated by $X.$
Moreover, as noted earlier, our expressions in $w_1$ for commutators,
when reversed in $\overline{w_1},$ give expressions which,
evaluated in $G,$ give $1,$ hence the same is true for the formal
conjugates $v\,g_i^{-1} g_j^{-1} (g_i\,g_j)\,v^{-1}$
we used, hence $\overline{w_1}$ itself,
evaluated in $G,$ gives $1.$
Hence $w= w_1\,w_0\,\overline{w_1},$
evaluated in $G,$ gives $u\,z_0\,1 = z,$ as desired.
So $X$ indeed generates $\r{Core}(G).$

The next result records the consequence of the above bound,
then notes how it can be strengthened.

\begin{theorem}\label{T.gen_leq}
If $G$ is a finitely generated group with $\r{gen}(G)=n,$ then
\begin{equation}\begin{minipage}[c]{35pc}\label{d.leq_2^n}
$\r{gen}(\r{Core}(G))\ \leq\ 2^n.$
\end{minipage}\end{equation}

More sharply, if $N$ is the subgroup of $G$ generated
by all squares \textup{(}a normal subgroup\textup{)},
and $\r{gen}(G/N)=m\leq n,$ we have
\begin{equation}\begin{minipage}[c]{35pc}\label{d.leq_1+n+}
$\r{gen}(\r{Core}(G))\ \leq\ (1+n+n(n{-}1)/2)+
2^m - (1 + m + m(m{-}1)/2).$
\end{minipage}\end{equation}

Namely, if we take a generating set $\{g_1,\dots,g_n\}$
for the group $G$ such that the images in $G/N$ of $g_1,\dots,g_m$
generate $G/N,$ while $g_{m+1},\dots,g_n\in N,$ then a generating
set $X$ for $\r{Core}(G)$ with the above cardinality
is given by the set of those products $g_{i_1}\dots\,g_{i_k}$
\textup{(}including the empty product, with $k=0)$
such that $i_1<\dots<i_k,$ and either $i_k\leq m,$ or $k\leq 2.$
\end{theorem}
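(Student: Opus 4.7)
The bound $\r{gen}(\r{Core}(G))\leq 2^n$ is immediate from the discussion preceding the theorem: any $\!n\!$-generated $G$ is a quotient of the free group $F_n$, so $\r{Core}(G)$ is a quandle-quotient of $\r{Core}(F_n)$, and the $\!2^n\!$-element generating set just constructed for $\r{Core}(F_n)$ descends to one for $\r{Core}(G)$. For the sharper bound, I plan to show that the set $X$ described in the theorem generates $\r{Core}(G)$, by refining the free-group argument so as to exploit only the subset-products we have available.

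Given $z\in G$, I will build a symmetric $\!\lhd\!$-word of the form \eqref{d.x_0dots} over $X$ evaluating to $z$, in three stages. \emph{Stage 1.} Since $\bar g_1,\dots,\bar g_m$ generate the elementary abelian $\!2\!$-group $G/N$ of order $2^m$, pick $x\in X$, a subset-product of $\{g_1,\dots,g_m\}$, with $xN=zN$. \emph{Stage 2.} Since $N=G'G^2$, we have $G/N\cong G^{\mathrm{ab}}/2\,G^{\mathrm{ab}}$, so in $G^{\mathrm{ab}}=G/G'$ (written additively) $\bar z-\bar x=2\bar y$ for some $\bar y$; as the singletons $g_1,\dots,g_n$ all lie in $X$ and generate $G^{\mathrm{ab}}$, one can write $\bar y=\sum_j\epsilon_j\,\bar{v_j}$ with $v_j\in X$ and $\epsilon_j\in\{\pm1\}$. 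Build a palindromic expression of the form \eqref{d.x_0dots_gp} with some even $n$, final term $x_n=x$, each $v_j$ inserted at a non-final position whose parity makes $(-1)^i=\epsilon_j$, and remaining non-final slots padded with $1\in X$; the abelianization is $\bar x+2\bar y=\bar z$, so the group-value $z_0$ of this $w_0$ satisfies $u:=z\,z_0^{-1}\in G'$. \emph{Stage 3.} $G'$ is generated as a group by conjugates $h\,[g_i,g_j]^{\pm1}\,h^{-1}$ ($i<j$); using \eqref{d.[]vs1}, each commutator factors formally as $g_i^{-1}\cdot g_j^{-1}\cdot(g_ig_j)$, and \emph{every} $g_ig_j$ with $i<j\leq n$ lies in $X$ (as a size-$\!2\!$ subset of $\{1,\dots,n\}$), while $h$ is a product of singletons from $X$. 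Let $w_1$ be the resulting formal word (evaluating to $u$ in $G$), and set $w=w_1\,w_0\,\overline{w_1}$, where $\overline{w_1}$ denotes the letter-reversal of $w_1$ with exponents preserved.

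It remains to verify that $w$ can be brought to the form \eqref{d.x_0dots_gp} and that its group-value is $z$. The former follows because $w$ is palindromic and, with $1\in X$ available as filler, any such palindromic string can be padded to meet the alternating-sign condition of \eqref{d.x_0dots_gp}. For the latter, the reversal of each factor $h\cdot g_i^{-1}g_j^{-1}(g_ig_j)\cdot h^{-1}$ sandwiches the group-theoretically trivial string $(g_ig_j)g_j^{-1}g_i^{-1}$ between two mutually inverse pieces, hence evaluates to $1$ in $G$; so $\overline{w_1}$ evaluates to $1$, and $w$ to $u\cdot z_0\cdot 1=z$. The main obstacle I anticipate is the bookkeeping: confirming that our restricted $X$ is genuinely enough at each stage---subsets of $\{g_1,\dots,g_m\}$ for Stage~1, singletons for Stage~2, pairs for Stage~3---and that inclusion--exclusion on the two overlapping families (size-$\!\leq\!2$ subsets of $\{1,\dots,n\}$ versus arbitrary subsets of $\{1,\dots,m\}$) yields the claimed cardinality $(1+n+\binom{n}{2})+2^m-(1+m+\binom{m}{2})$.
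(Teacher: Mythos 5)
Your proposal is correct and follows essentially the same route as the paper's proof: the $2^n$ bound descends from the free-group construction to quotients, and the sharper bound is obtained by building $z$ as a symmetric word $w_1\,w_0\,\overline{w_1}$ whose middle term handles $G/N,$ whose singleton padding handles $G/G',$ and whose commutator corrections via~\eqref{d.[]vs1} use only the two-element products in $X,$ with the reversed half evaluating to $1.$ The stage decomposition and the inclusion--exclusion count match the paper's argument step for step.
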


\begin{proof}
Above, we established~\eqref{d.leq_2^n} under the
simplifying assumption that $G$ was the free group on $n$ generators.
Since any $\!n\!$-generator group is a homomorphic image
of such a free group, the same bound
holds for all $\!n\!$-generator groups.

To get the sharper bound~\eqref{d.leq_1+n+}, note first that
starting with any $\!n\!$-element generating set for $G,$ we can
index it so that the images of $g_1,\dots,g_m$ generate $G/N,$
then modify each of $g_{m+1},\dots,g_n$ by a product of terms
$g_1,\dots,g_m$ so that the
new $g_{m+1},\dots,g_n$ all have trivial image in $G/N,$
as in the last sentence of the theorem.

Now let $X$ be the set of products of elements of $\{g_1,\dots,g_n\}$
described in that same sentence.
To count the elements of $X,$ note that
the numbers of such products of $0,$ $1$ and $2$ factors from
$\{g_1,\dots,g_n\}$ are respectively $1,$ $n,$ and
$n(n-1)/2,$ while the set of products arising from arbitrary subsets
of $\{g_1,\dots,g_m\}$ has $2^m$ elements.
These two sets intersect in the set of
products of $0,$ $1$ and $2$ factors from
$\{g_1,\dots,g_m\},$ which has cardinality $1+m+m(m-1)/2,$
which we therefore subtract off; so the cardinality of $X$
is indeed the right-hand side of~\eqref{d.leq_1+n+}.

Given any $z\in G,$ let us now describe how to represent it as a
symmetric expression in the elements of $X.$
We choose the middle term of the expression to be a member of $X$
having the same image in $G/N$ as $z$ has.
As in the proof of Theorem~\ref{T.gen_geq}
(but without the complication of splitting apart products
$g_{i_1}\dots g_{i_k}\,g_j),$ we surround that term
symmetrically with terms $g_1^{\pm 1},\dots,g_n^{\pm 1},$
so that the result has the same image in the
abelian group $G/G'$ as $z.$
Finally, as in the discussion of the
case of free $G,$ we surround
the resulting expression with expressions which, on the
left-hand side, give a product of conjugates of commutators
$[g_i,\,g_j]$ $(1\leq i<j\leq n)$ which brings our
expression to exactly the value $z,$ while on the right-hand
side, they reduce to~$1.$
We have thus written $z$ as a $\!\lhd\!$-expression in elements of $X.$
\end{proof}

Note that if $m=n$ above, then the lower bound of
Theorem~\ref{T.gen_geq} and the upper bound of the above theorem
agree, and we get the exact result $\r{gen}(\r{Core}(G))=2^n.$

In the opposite direction, if $m\leq 2,$ no subsets
of $\{g_1,\dots,g_m\}$ have $>2$ elements, so
the upper bound of~\eqref{d.leq_1+n+} simplifies to $1+n+n(n-1)/2;$
but this is in general larger than the lower bound of~\eqref{d.fg}.

We have seen that the lower bound of~\eqref{d.fg}
is witnessed by abelian groups, but we ask

\begin{question}\label{Q.which}
Can the upper bound of~\eqref{d.leq_1+n+} be improved?
\end{question}

Of course, that bound can be strengthened for groups satisfying
additional conditions.
For instance, if in some group $G$ with generators
$g_1,\dots,g_n$ as shown, we know that a
certain commutator $[g_i,g_j]$ is $1,$ or more generally,
is expressible as a product of conjugates of other
commutators, and $j>m,$ then the generator $g_i g_j,$
no longer needed to get an expression for $[g_i,g_j]$
as in~\eqref{d.[]vs1}, can be dropped from our set~$X.$

I have not examined

\begin{question}\label{Q.fp}
If a group $G$ is {\em finitely presented}, is the same true
of the involutory quandle $\r{Core}(G)$?
If so, what bound can be put on the number of relators
needed by a presentation of $\r{Core}(G),$ in terms of the
numbers of generators and relators in a presentation of~$G$?
\end{question}

We end this note with a few tangential observations.

\section{Comparison with heaps}\label{S.heap}

A derived operation on groups related
to the core quandle operation $\lhd$ is the ternary operation
\begin{equation}\begin{minipage}[c]{35pc}\label{d.heap}
$\tau(x,\,y,\,z)\ =\ x\,y^{-1}\,z,$
\end{minipage}\end{equation}
which satisfies the identities
\begin{equation}\begin{minipage}[c]{35pc}\label{d.heap_assoc}
$\tau(\tau(v,\,w,\,x),\,y,\,z)\ =
\ \tau(v,\,\tau(y,\,x,\,w),\,z)\ =
\ \tau(v,\,w,\,\tau(x,\,y,\,z)),$
\end{minipage}\end{equation}
\begin{equation}\begin{minipage}[c]{35pc}\label{d.heap_inv}
$\tau(x,\,x,\,y)\ =\ y\ = \tau(y,\,x,\,x).$
\end{minipage}\end{equation}
A set with an operation $\tau$ satisfying~\eqref{d.heap_assoc}
and~\eqref{d.heap_inv} is called a {\em heap}.
(See \cite[Exercises~9.6:10-11]{245} for some background and
references.)
For $G$ a group, let us write $\r{Heap}(G)$ for the heap with
the same underlying set as $G,$ and the operation~\eqref{d.heap}.

As with involutory quandle structures, the heap structure on
$\r{Heap}(G)$ does not determine the group structure:
again, every right or left translation operation of the
group structure is an automorphism of the heap structure.
But in contrast to the case of involutory quandles,
every heap structure on a nonempty set $H$ does arise as above from
a group structure on $H,$
which is unique {\em up to isomorphism,} and which
becomes unique as soon as one chooses an element $e\in H$ to
be the identity element.
The group structure is then given by
\begin{equation}\begin{minipage}[c]{35pc}\label{d.gp_fr_hp}
$x\,y\ =\ \tau(x,\,e,\,y),\quad x^{-1}\ =\ \tau(e,\,x,\,e).$
\end{minipage}\end{equation}

Because of this near-equivalence with groups, heaps
are not much studied for their own sake, though one sometimes
calls on the concept in situations where a natural heap structure
exists but a natural group structure does not.
Namely, given two objects
$C$ and $D$ of a category, the set of isomorphisms $C\to D$
has only a natural structure of heap, given by the same
formula~\eqref{d.heap}.

The core quandle structure on the underlying set of a group is,
clearly, expressible in terms of the heap structure:
\begin{equation}\begin{minipage}[c]{35pc}\label{d.qndl_fr_heap}
$x\lhd y\ =\ \tau(x,\,y,\,x).$
\end{minipage}\end{equation}
But this loses much more information about the group than the heap
structure did.
As we have seen, not every involutory quandle
arises from a group, or is even
embeddable in one arising in that way, and when it does
arise from a group, it need not determine that group up to isomorphism
(Lemma~\ref{L.cm_ncm}).

\section{A class of structures weaker than involutory quandle
structures}\label{S.thoughts}

In this note, special behavior has repeatedly involved the exponent~$2$
in groups (e.g., Lemma~\ref{L.exp2}, Proposition~\ref{P.fin_traj}(ii),
the second paragraphs of Proposition~\ref{P.rep}
and Theorem~\ref{T.gen_geq}, and Theorem~\ref{T.gen_leq}).
A generalization of the subject,
in which more exponents can be expected to show such
behavior, would be to study, for general $n>1,$ the binary
operator $\lhd_{n}$ on underlying sets of groups defined
to carry the terms $x_1$ and $x_0$ of
a trajectory to $x_n;$ in other words,
\begin{equation}\begin{minipage}[c]{35pc}\label{d.lhd_n}
$x\,\lhd_{n}\,y\ =\ x\,(y^{-1}x)^{n-1}.$
\end{minipage}\end{equation}
The operation we have called $\lhd$ is in this notation $\lhd_{2}.$
For $n>2,$ $\lhd_n$ is not, in general, a quandle operation.

If $(x_i)_{i\in\Z}$ is a trajectory in a group,
and $S$ any subset of $\Z,$
then it is not hard to show that the set of terms generated
under $\lhd_{n}$ by $\{x_i\mid i\in S\}$
will have the property that each of its members is
$x_j$ for some $j$ which is both congruent modulo $n$ to
some member of $S$ and congruent modulo $n-1$ to some
(possibly different) member of $S.$
Note also that the right-hand side of~\eqref{d.lhd_n} has value $x$
if and only if $(y^{-1}x)^{n-1}=1,$ and value $y$
if and only if $(y^{-1}x)^n=1.$
So it seems that the $\!\lhd_{n}\!$-analogs of core quandles of
groups will show interesting behavior involving exponents
that divide $n$ or $n-1.$

\section{On language and notation}\label{S.flip}

When I first looked at the operation $x\,y^{-1}x$ on groups,
and the identities it satisfies, not knowing
that these had already been studied,
I wrote a draft of this note in which a set with
an operation satisfying those identities was called a
``flip-set'', since that operation can be looked at
as flipping $y$ past $x$ in the trajectory they generate.
After learning that such structures had already been
studied, I brought this note into conformity with standard language.
However, I find ``involutory quandle'' cumbersome
compared with ``flip-set''.
I leave it to workers more involved in the subject
to decide whether it might be worth switching to a name
such as ``flip-quandle''.

The notation I originally used for $x\,y^{-1}x$ was $x\,\natural\,y$
(which I read ``$x$ flip $y$'' -- I don't know
how $x \lhd y$ is pronounced).
It might in some contexts be convenient to distinguish the operations
of $\r{Conj}(G)$ and $\r{Core}(G)$ as $\lhd$ and $\natural\,.$
(The quandle operations $x^n\,y\,x^{-n},$ and the non-quandle
operations~\eqref{d.lhd_n} discussed above could then
be distinguished as $\lhd_n$ and $\natural_n.)$

\section{Acknowledgements}\label{S.Ackn}
I am indebted to Yves de Cornulier and Ualbai Umirbaev
for pointing out that the objects I was calling flip-sets are known as
involutory quandles, to Valeriy Bardakov,
J. Scott Carter, and J\'{o}zef H. Przytycki
for pointing me to related material on the subject,
and to the referee for several helpful suggestions.

\end{document}